\documentclass{article}

\usepackage{pgf,tikz}
\usetikzlibrary{arrows}
\usepackage{amsmath,amsfonts}
\usepackage{wrapfig}
\usepackage[english]{babel}
\usepackage{subfig}
\usepackage{amsthm}
\usepackage{amssymb}
\usepackage{fancyhdr}
\usepackage{stmaryrd}
\usepackage{graphicx}
\usepackage{setspace}
\usepackage{hyperref}
\usepackage{graphics}
\usepackage{verbatim}
\usepackage[letterpaper,left=3cm,right=3cm,top=3cm,bottom=3cm]{geometry}

\theoremstyle{definition}

\theoremstyle{plain}
\newtheorem{Def}{Definition}[subsection]
\newtheorem{Lemma}[Def]{Lemma}
\newtheorem{Prop}[Def]{Proposition}
\newtheorem{Theo}[Def]{Theorem}

\newtheorem{Remark}[Def]{Remark}
\newtheorem{Theo1}{Theorem}

\title{Billiards in near rectangles}
\date{\today}
\author{Yilong Yang, Haibin Chang}

\begin{document}
\maketitle
\numberwithin{equation}{subsection}
\section{Introduction}
\onehalfspacing
Inside a polygonal billiard table, a billiard ball travels in straight line until it hits an edge. The billiard ball bounces off an edge obeying the law of reflection, i.e. the angle of incidence equals the angle of reflection. See Figure~\ref{fig:Law of Reflection}.

\begin{wrapfigure}{r}{4.5cm}
\vspace{-40pt}
  \begin{center}
    \includegraphics[width=4.5cm]{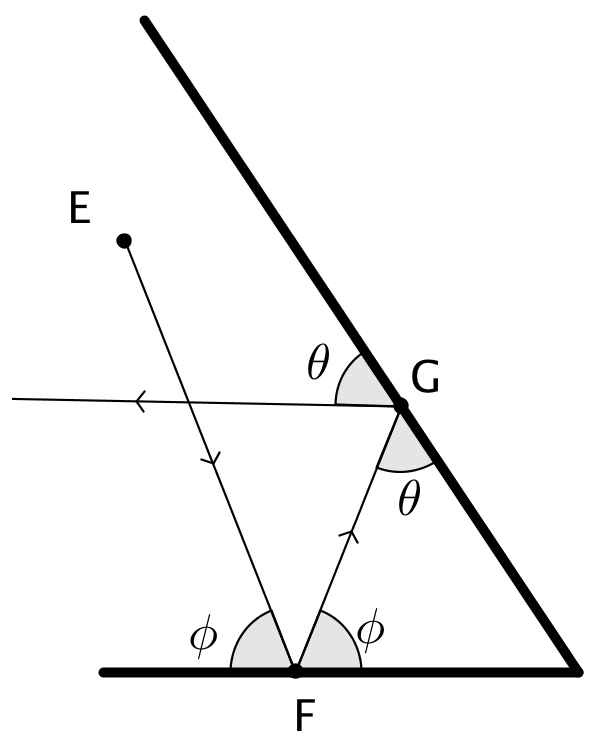}
  \end{center}
  \caption{Law of Reflection}
  \vspace{-10pt}
\label{fig:Law of Reflection}
\vspace{-10pt}
\end{wrapfigure}

The trajectory of the billiard ball is called a \textit{billiard path}. The path is \textit{periodic} if the ball repeats the same path over and over again. We ignore cases where the billiard ball hits one of the vertices of the billiard table.

One major question in this subject is the following: Does every polygon admit a periodic billiard path? It is known that all polygons whose angles are rational multiple of $\pi$ have a periodic billiard path, see \cite{Rational1} and \cite{Rational2}. Schwartz has shown that all triangles with all angles smaller than 100 degree have periodic billiard path \cite{Triangle1}, \cite{Triangle2}. In another paper by Hooper and Schwartz, it is shown that all triangles near enough to isosceles triangles have periodic billiard paths \cite{PatSchwartz}.

In this paper, we consider quadrilaterals which are close to being rectangles. The space $\mathcal{Q}$ of quadrilaterals (modulo similarity) can be ``considered'' as a subset of the $\mathbb{R}^4$, see Section ~\ref{Section:Quadrilateral Space}. The main result of this paper is the following:

\begin{Theo1}
For every rectangle $r \in \mathcal{Q}$, there exists an open neighborhood $U \subset \mathcal{Q}$ containing $r$, such that every quadrilateral in $U$ has a periodic billiard path.
\label{Thrm:1}
\end{Theo1}

Theorem ~\ref{Thrm:epsilon107}  will provide an explicit neighborhood $U$ for the special case when $r$ is a square. 

\begin{wrapfigure}{r}{6cm}
\vspace{-60pt}
  \begin{center}
    \includegraphics[width=6cm]{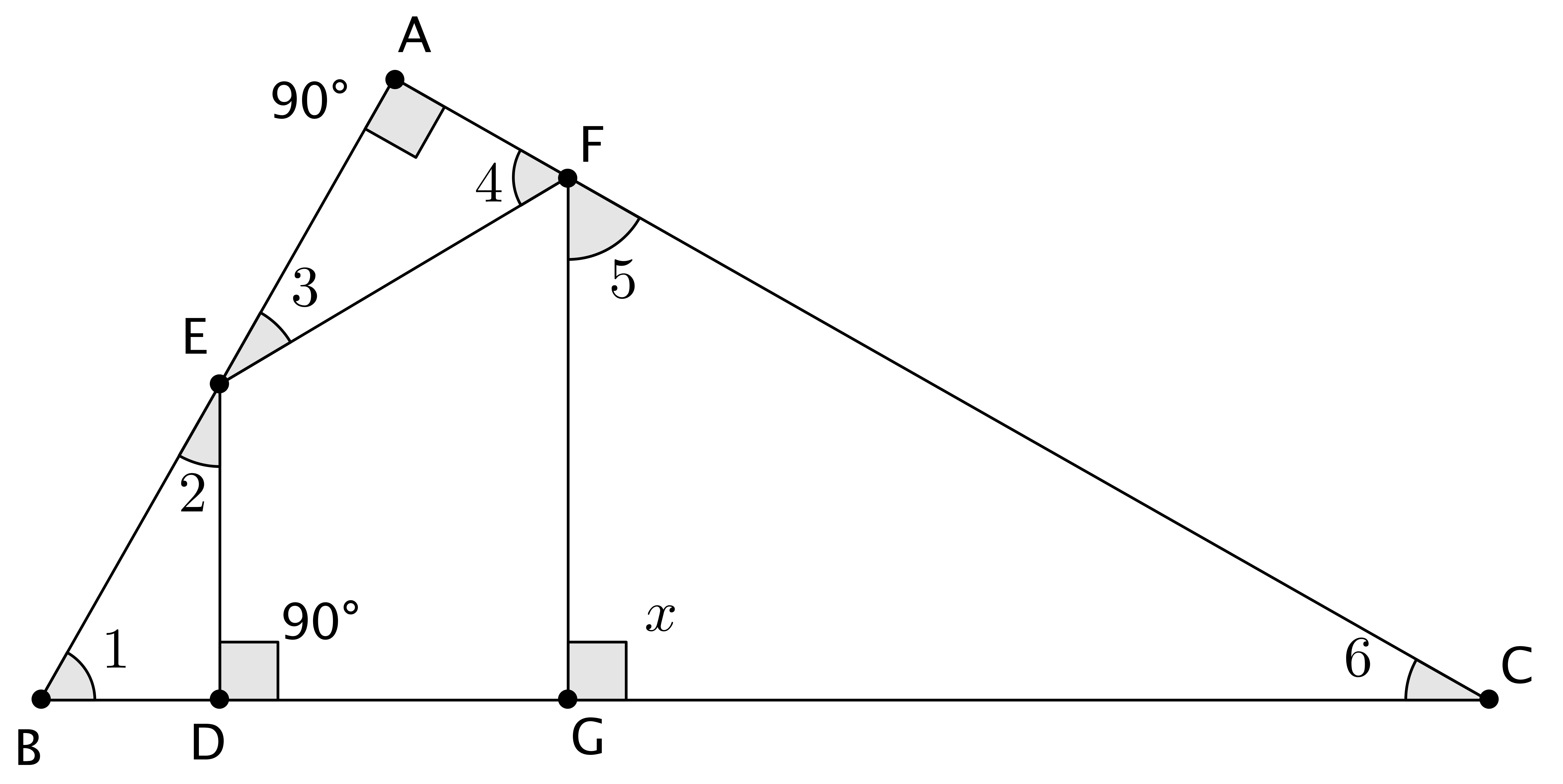}
  \end{center}
  \vspace{-10pt}
  \caption{Right Triangle}
\label{fig:Right Triangle}
\vspace{-10pt}
\end{wrapfigure}
%%%%%	 	  %
	%	 	  %
%%%%%	 	  %
%		 	  %
%%%%%  % 	  %
\section{Preliminary}
\subsection{Tools for Studying Billiards}
Consider a right triangle in Figure ~\ref{fig:Right Triangle}. Suppose the billiard ball starts from point $D$ moving upwards, perpendicular to side $BC$. Then it will hit the point $E$, reflect to hit point $F$, and then reflect to hit point $G$. We will leave it to the reader to show that $\angle x = \frac{\pi}{2}$.

The implication is that: after the ball hit $G$, it would bounce back along the exact same trajectory, and continues periodically. So, all right triangles have this periodic billiard path.

\begin{wrapfigure}{r}{5cm}
\vspace{-50pt}
  \begin{center}
    \includegraphics[width=5cm]{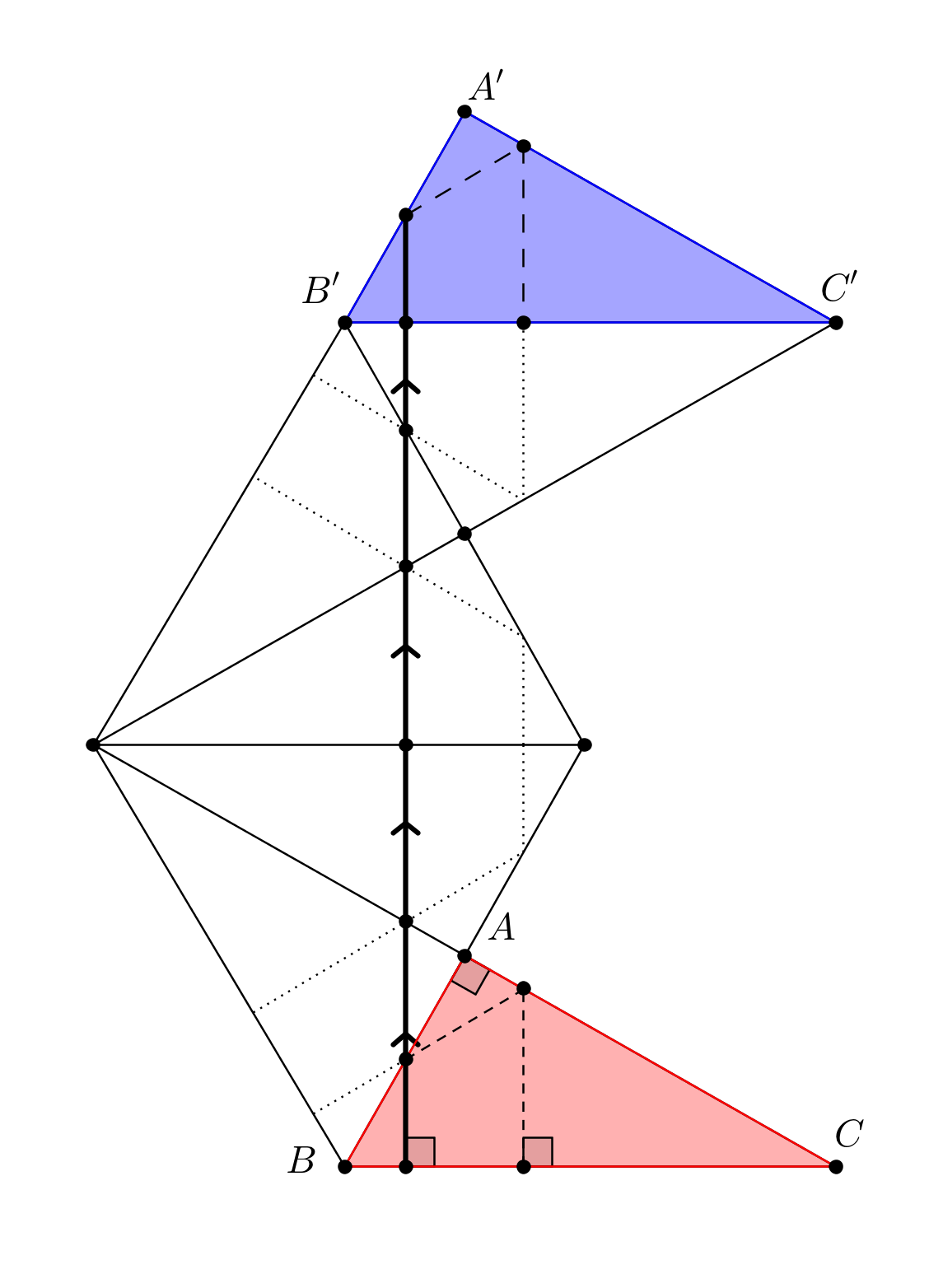}
  \end{center}
\vspace{-20pt}
  \caption{Unfolding for right triangle}
  \vspace{-20pt}
\label{fig:Right Triangle Unfolding}
\end{wrapfigure}

If we label the sides $CB, BA, AC$ using $0,1,2$ respectively, then the sequence $012021$ represents the order in which the trajectory hits the sides of the polygon. We call this finite sequence the $\textit{orbit type}$ of the periodic path, and the length of the orbit type (in the example above, 6) the \textit{combinatorial length}. 

A very useful tool to study billiard paths is the unfolding. See \cite{Tabachnikov} also.

\begin{Def}
Given any orbit type $W=w_1 w_2 w_3\ldots w_n$, and any polygon $P$, the corresponding \textbf{unfolding} is a sequence of polygons $U(W,P)=P_0 P_1 P_2 \ldots P_n$, such that  $P_0=P$, and each $P_j$ is obtained by reflecting $P_{j-1}$ along the edge $w_j$ for $j\geq 1$. 
\label{Def:2.1.3}
\end{Def}

For example, in Figure ~\ref{fig:Right Triangle Unfolding}, each time the ball hits an edge of the polygon, instead of reflecting the billiard path, we reflect the polygon and keep the path straight. The straight line in the unfolding will ``correspond'' to the original periodic path. Note that the two shaded triangles in Figure ~\ref{fig:Right Triangle Unfolding} are related by a translation along the direction of the billiard trajectory. Also, this translation requires a number of 6 reflection, which is exactly the combinatorial length of the orbit. 

\begin{wrapfigure}{l}{4cm}
\vspace{-20pt}
  \begin{center}
    \includegraphics[width=4cm]{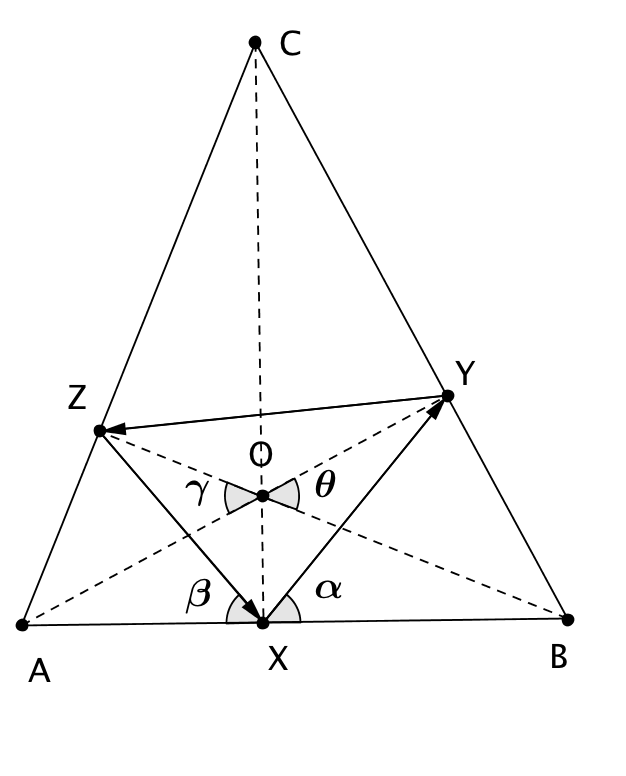}
  \end{center}
\vspace{-20pt}
  \caption{Fagnano Path}
\label{fig:Fagnano}
\vspace{10pt}
\end{wrapfigure}

Next, let us look at the famous Fagnano orbit. For any acute triangle, we can connect the three feet of the altitudes, as in Figure ~\ref{fig:Fagnano}. Then the orbit $XY, YZ, ZX$ is actually a periodic billiard path. The proof is as follows:

Clearly points $X, Y, B, O$ lies on a circle, and $A,X,O,Z$ on another circle. So $\alpha = \theta=\gamma=\beta$. Hence, the path obey the law of reflection at the point $X$, and similarly at $Y, Z$. 

\begin{wrapfigure}{r}{5.5cm}
\vspace{-30pt}
  \begin{center}
    \includegraphics[width=5.5cm]{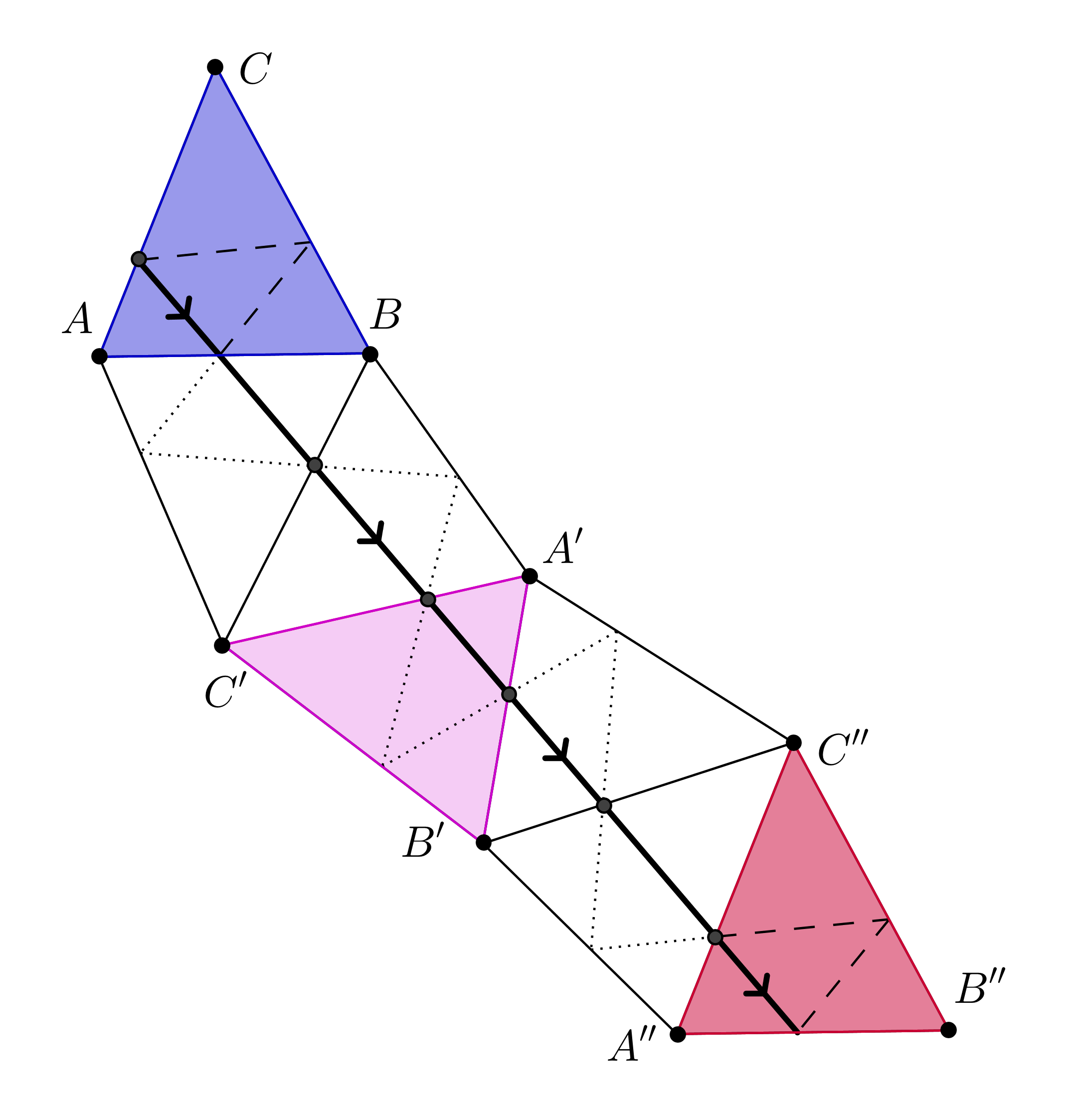}
  \end{center}
\vspace{-20pt}
  \caption{Unfolding for Fagnano path}
\label{fig:Fagnano Unfolding}
\vspace{-30pt}
\end{wrapfigure}

By labelling $AB,BC,CA$ using $0,1,2$ respectively, we can ``represent'' this periodic path by the word $012$ (which has length 3). However, after 3 reflections in the unfolding(see Figure~\ref{fig:Fagnano Unfolding}), the resultant polygon ($A'B'C'$) is not a translation of the original polygon ($ABC$). 

The reason is that each reflection of the polygon changes its orientation. In order to have a translation, we need an even number of reflections. So if a periodic path can be ``represented'' by a word $P$ of odd length (in the example above, $012$), then we define its orbit type to be $P^2$($012012$), so that the first polygon ($ABC$) and last polygon ($A''B''C''$) in the unfolding are related by a translation.

\begin{comment}
\begin{Theo}
(Fagnano) Every acute triangle has a periodic billiard path of orbit type $012012$.
\label{Theo:2.1.4}
\end{Theo}
\end{comment}

\begin{comment}
\begin{figure}[h!]
\begin{center}
\subfloat[]{\includegraphics[height=8cm]{Picture9}\label{fig:Right Triangle Unfolding}}
\subfloat[]{\includegraphics[height=8cm]{Picturea1}\label{fig:Fagnano Unfolding}}
\end{center}
\caption{Unfolding for (a)Right Triangle, (b)Fagnano Path}
\end{figure}
\end{comment}

\begin{comment}
\begin{wrapfigure}{r}{7cm}
\vspace{-20pt}
  \begin{center}
    \includegraphics[width=7cm]{}
  \end{center}
  \caption{Quadrilateral Unfolding}
\label{fig:Quadrilateral Unfolding}
\vspace{-10pt}
\end{wrapfigure}
\end{comment}

Of course, we can always reflect a polygon according to some arbitrary word. Then one important question is, given a word $W$, can we always find a corresponding periodic billiard path in some given polygon $P$? We have the following Lemma:

\begin{figure}[h]
\centering
\vspace{-15pt}
\subfloat[]{\includegraphics[height=4cm]{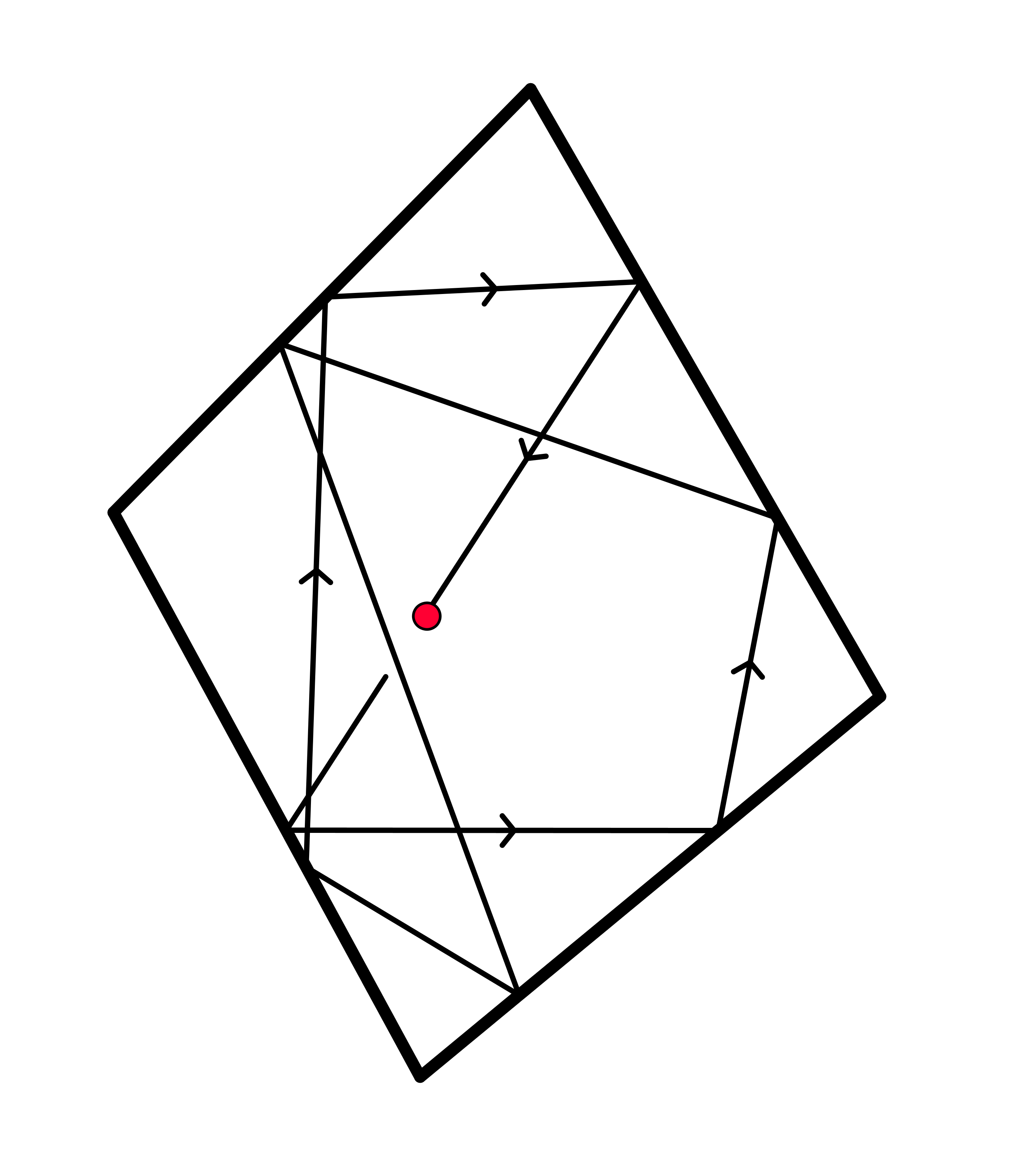}\label{fig:topvertices2}}
\subfloat[]{\includegraphics[height=5cm]{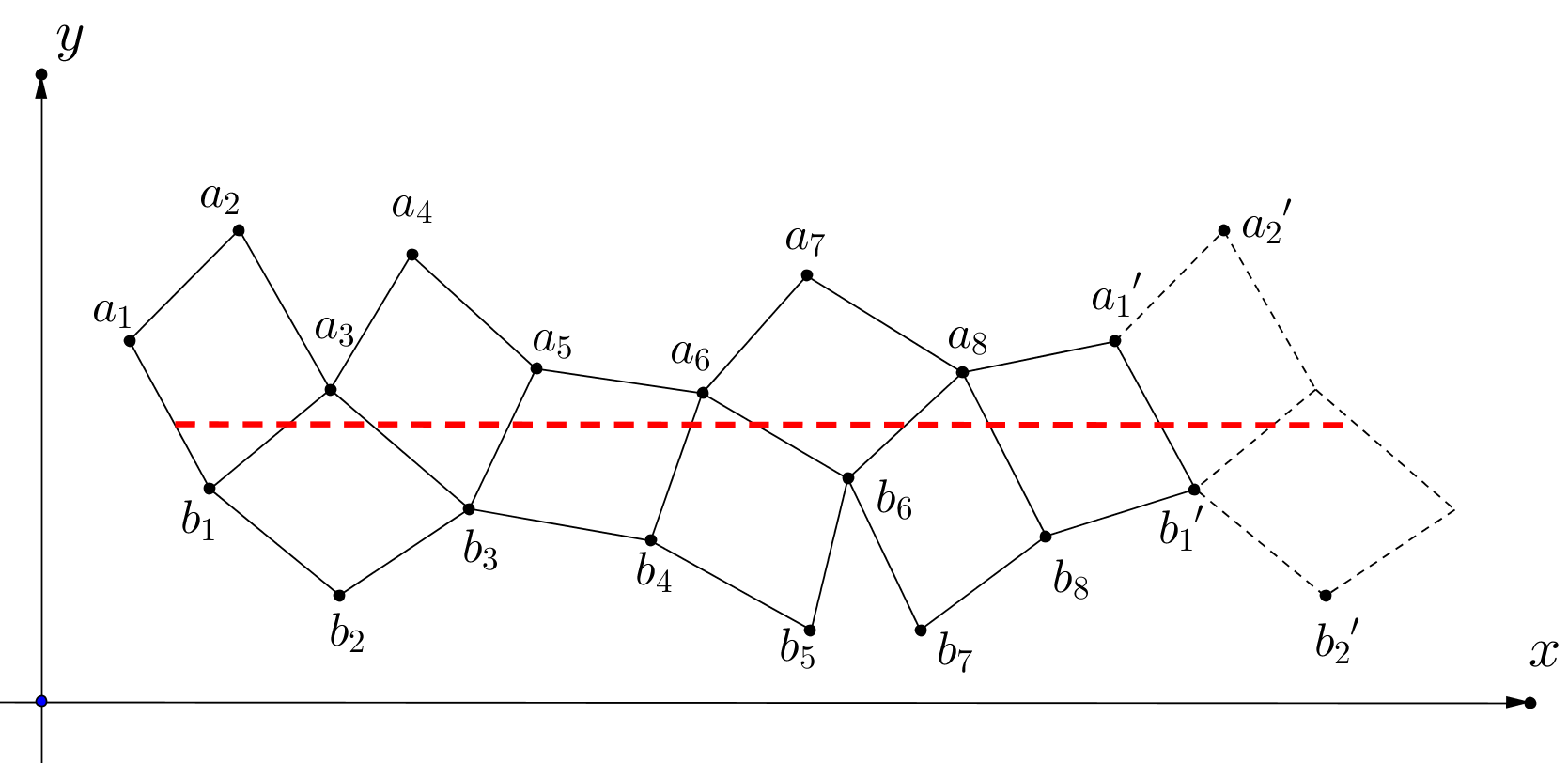}\label{fig:topvertices1}}
\caption{}
\label{fig:topvertices3}
\end{figure}
\begin{Lemma}
For an even word $W$ and a polygon $P$, there exists a periodic billiard path on $P$ with the orbit type $W$ if and only if:
\begin{enumerate}
\item The first and the last polygon in the unfolding $U(W,P)$ are related by a translation. AND
\item There exists a straight line segment, parallel to the direction of the translation, that stays within the unfolding and does not touch any of the vertex.
\end{enumerate}
\label{Lemma:UnfoldingLemma}
\end{Lemma}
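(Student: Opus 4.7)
The plan is to prove both directions by exploiting the fundamental fact that reflection across a line preserves straight lines. A piecewise linear path $\gamma$ on $P$ that bounces off the edge $w_i$ corresponds, under the unfolding, to a straight segment passing into $P_i$, because reflecting the polygon instead of the trajectory exactly undoes the bounce. Thus a periodic billiard orbit on $P$ with orbit type $W$ and a straight line segment in $U(W,P)$ are two descriptions of the same geometric object, and conditions (1) and (2) are just the translation into unfolding language of ``closes up'' and ``is a legitimate interior trajectory.''

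For the forward direction, suppose $\gamma$ is a periodic billiard path on $P$ with orbit type $W = w_1 \cdots w_n$. Unfolding bounce by bounce produces a straight segment $\ell$ traversing $U(W,P)$ from a point of $P_0$ to a point of $P_n$; that $\gamma$ misses every vertex translates to $\ell$ avoiding all vertices of all $P_j$, giving (2). For (1), note that the composition of the $n$ reflections sending $P_0$ onto $P_n$ is a Euclidean isometry, and since $n$ is even it is orientation preserving, so it is either a translation or a rotation. Periodicity of $\gamma$ means that this isometry maps the endpoint of $\ell$ in $P_n$ together with its direction vector to the starting data of $\ell$ in $P_0$; an orientation-preserving isometry that fixes a direction must be a translation, giving (1).

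For the converse, assume (1) and (2) and let $\ell$ be the guaranteed segment, parallel to the translation $T$ relating $P_0$ and $P_n$. Decompose $\ell$ as the concatenation of its pieces $\ell_0,\ell_1,\ldots,\ell_n$ lying in $P_0,P_1,\ldots,P_n$ respectively, and fold each $\ell_j$ back to $P$ by applying the composition of reflections across $w_j,w_{j-1},\ldots,w_1$. The resulting segments sit inside $P$ and meet consecutively on the edges $w_1,w_2,\ldots,w_n$ of $P$. Because reflecting across an edge carries the angle a line makes with that edge to itself, the angle of incidence equals the angle of reflection at each junction, so the concatenation is a bona fide billiard path. The translation hypothesis guarantees that the folded image of $\ell_n$ ends at the same point and in the same direction as the folded image of $\ell_0$ begins, so the path is periodic, and the no-vertex clause of (2) ensures the path itself never meets a vertex.

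The main technical subtlety, and the only point that is not pure bookkeeping, is to make precise that condition (2) actually forces $\ell$ to cross the interior edges of the unfolding in exactly the order $w_1,w_2,\ldots,w_n$. One has to rule out $\ell$ skipping a polygon in the chain or re-entering one it already left, so that the folded segments correspond bijectively to the letters of $W$ and yield the advertised orbit type. This follows from the facts that consecutive polygons $P_{j-1}$ and $P_j$ share exactly the edge $w_j$ and that $\ell$, not meeting any vertex, is transverse to every edge it crosses; once this is recorded, all remaining verifications are direct consequences of the reflective nature of the unfolding construction.
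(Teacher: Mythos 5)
The paper never proves this lemma: it is presented as a known tool (with a pointer to Tabachnikov's book), and the text that follows the statement only illustrates condition (2) on the example of Figure~\ref{fig:topvertices1}. So there is nothing to compare against; your argument stands on its own, and it is essentially correct --- it is the standard unfolding proof. The forward direction is clean: an even composition of reflections is orientation preserving, and an orientation-preserving planar isometry fixing the direction of the (nondegenerate) trajectory cannot be a nontrivial rotation, hence is a translation. The converse is also right in outline, with two points worth tightening. First, you correctly flag that ``stays within the unfolding'' must mean that $\ell$ crosses the common edges $w_1,\ldots,w_n$ in order (the polygons $P_j$ may overlap, so membership in the union is not enough); the paper's own illustration, where the condition is rephrased as ``every $a_i$ lies above every $b_j$,'' confirms this is the intended reading, and your transversality-plus-shared-edge remark is the right way to pin it down, though it deserves an actual induction on $j$ rather than a parenthetical. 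Second, to close the path up you should take the specific sub-segment of $\ell$ from a point $q\in P_0$ to $T(q)\in P_n$, where $T$ is the translation of condition (1); since $\ell$ is parallel to $T$, the point $T(q)$ lies on the line carrying $\ell$, and folding that sub-segment back gives matching position and velocity at the two ends, hence periodicity. Neither gap is a flaw in the approach, only in the level of detail.
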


To illustrate the second condition, let's look at Figure ~\ref{fig:topvertices1}, which is the unfolding for the periodic billiard path in Figure ~\ref{fig:topvertices2}. Assuming Condition 1 is satisfied, then a polygon has this periodic billiard path if and only if we can ``fit'' the red dashed line into the corresponding unfolding. (i.e. in Figure ~\ref{fig:topvertices1}, if we choose the direction of the translation as our $x$-axis, then we need the $y$-coordinate of $a_i$ to be larger than that of $b_j$ for any $i$ and $j$.)

%%%%%	   %%%%%
	 %	 	    %
%%%%%	   %%%%%
%		   %
%%%%%   %   %%%%%

\subsection {The space of quadrilaterals}
\label{Section:Quadrilateral Space}

\begin{wrapfigure}{r}{4.5cm}
\vspace{-20pt}
  \begin{center}
    \includegraphics[width=4cm]{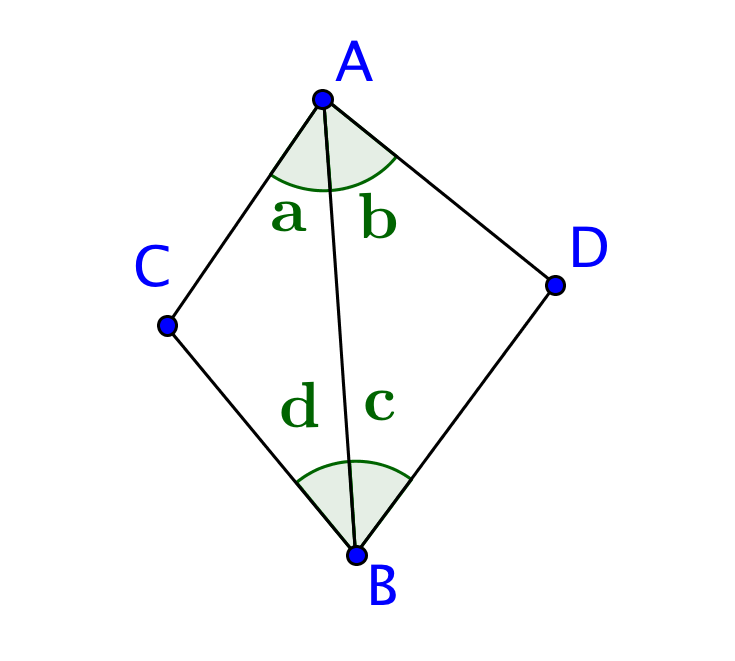}
  \end{center}
  \vspace{-20pt}
  \caption{Four Parameters}
\label{4 Dimension Quad}
\vspace{-20pt}
\end{wrapfigure}

Consider the space of all quadrilaterals (up to similarity), call it $\mathcal{Q}$. For any quadrilateral, we can split it into two triangles by drawing a diagonal. Each triangle is uniquely determined by two of its angles. So, any quadrilateral is determined by four parameters, $a,b,c,d$, as shown in Figure~\ref{4 Dimension Quad}. The parameters of the square are $(\frac{\pi}{4}, \frac{\pi}{4},\frac{\pi}{4},\frac{\pi}{4})$, and rectangles are on the line 
\begin{center} $\{(a,b,c,d):a=c, b=d, a+b=\frac{\pi}{2}\}$. \end{center}
For each orbit type $W$, we call the set of all quadrilaterals with this orbit type the \textit{orbit tile} $O(W)$ of the orbit type. 

\begin{Def}
A orbit type $W$ is called \textbf{stable} if the orbit tile $O(W)$ is a nonempty open set. Otherwise, $W$ is unstable.
\label{Def:2.2.1}
\end{Def}

We will use the following criterion for the stability of an orbit type. See \cite{Tabachnikov}

\begin{Lemma}
For $O(W) \neq\emptyset$, an orbit type W is {\normalfont stable} if and only if the number of times each sides of the polygon appears in an odd position in the orbit type equals the number of times it appears in an even position.
\label{Lem:Stable Odd Even}
\end{Lemma}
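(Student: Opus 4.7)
The plan is to re-express the composition of reflections in the unfolding $U(W,P)$ as a product of reflections in the original sides of $P$, reduce Condition~1 of Lemma~\ref{Lemma:UnfoldingLemma} to a single linear equation on the edge directions, and then check when this equation is automatically satisfied as we vary $P$. First I will show by induction on $k$ that the composite $U_k := R_k\cdots R_1$ of the reflections used in the unfolding equals $r_{w_1} r_{w_2}\cdots r_{w_k}$, where $r_s$ denotes reflection across the line carrying side $s$ in $P$; the inductive step uses the conjugation identity $R_k = U_{k-1}\, r_{w_k}\, U_{k-1}^{-1}$, and the product telescopes. Writing $\phi_s \in \mathbb{R}/\pi\mathbb{Z}$ for the direction of side $s$ of $P$, the composition $r_s r_t$ has linear part a rotation by $2(\phi_s - \phi_t)$, so pairing successive reflections gives
\[
\text{rotation angle of } T \;=\; 2\sum_{i=1}^{n}(-1)^{i-1}\phi_{w_i} \;=\; 2\sum_s \bigl(n_s^{\text{odd}}-n_s^{\text{even}}\bigr)\phi_s \pmod{2\pi}.
\]
Since $|W|$ is even, $T$ is orientation-preserving, so $T$ is a translation precisely when $\sum_s (n_s^{\text{odd}}-n_s^{\text{even}})\phi_s \equiv 0 \pmod \pi$.

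For the ``if'' direction, assume $n_s^{\text{odd}} = n_s^{\text{even}}$ for every side $s$. The displayed equation becomes $0 \equiv 0$, so Condition~1 of Lemma~\ref{Lemma:UnfoldingLemma} holds for \emph{every} quadrilateral. Condition~2 is meanwhile an open condition on $P$, since a straight segment inside the unfolding that avoids finitely many vertices continues to do so under a small deformation of $P$. As $O(W)$ is assumed nonempty, picking $P_0 \in O(W)$ and perturbing yields a whole open neighborhood inside $O(W)$, so $O(W)$ is open.

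For the ``only if'' direction, I will express $\phi_s$ as a linear function of the four angle parameters $(a,b,c,d)$ on $\mathcal{Q}$: the direction of each successive side of $P$ turns by $\pi$ minus the corresponding interior angle, and each interior angle is an affine function of $(a,b,c,d)$. If some $m_s := n_s^{\text{odd}}-n_s^{\text{even}}$ is nonzero, then the relation $\sum_s m_s = 0$ (forced by $|W|$ being even) kills the global-rotation redundancy among the $\phi_s$, leaving $\sum_s m_s \phi_s$ as a \emph{nontrivial} linear form in $(a,b,c,d)$. Its zero set is then a codimension-one affine subspace of $\mathcal{Q}$ with empty interior, so $O(W)$ has empty interior and $W$ is unstable. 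The delicate point will be verifying this nontriviality cleanly: one must bookkeep the coefficients, noting that the closing relation $\sum_s m_s = 0$ is precisely what cancels the only way a nontrivial $(m_s)$ could produce an identically-vanishing linear form on $\mathcal{Q}$.
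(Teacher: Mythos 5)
The paper does not actually prove this lemma --- it is quoted as a known criterion with a citation to Tabachnikov --- so there is no internal proof to compare against. Your argument is the standard holonomy computation and it is correct: the telescoping identity $U_k=U_{k-1}r_{w_k}$, the pairing of reflections into rotations by $2(\phi_{w_{2j-1}}-\phi_{w_{2j}})$, and the resulting criterion $\sum_s m_s\phi_s\equiv 0\pmod\pi$ with $m_s=n_s^{\text{odd}}-n_s^{\text{even}}$ are all right, and the openness of Condition~2 of Lemma~\ref{Lemma:UnfoldingLemma} under perturbation (the translation direction and the ``corridor'' inequality both vary continuously) gives the ``if'' direction. The one step you flag but do not finish --- nontriviality of the linear form when some $m_s\neq 0$ --- does close up, and is worth writing out since it is where the lemma could conceivably fail. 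Label the sides $0,1,2,3$ cyclically and let $A_i$ be the interior angle between sides $i-1$ and $i$, so that $\phi_i\equiv\phi_0-(A_1+\cdots+A_i)\pmod\pi$. Using $\sum_s m_s=0$ (which holds because $|W|$ is even, so odd and even positions are equinumerous) to eliminate $\phi_0$, one gets
\begin{equation*}
\sum_s m_s\phi_s \;\equiv\; m_0A_1-(m_2+m_3)A_2-m_3A_3 \pmod\pi,
\end{equation*}
and since $(A_1,A_2,A_3)$ fills an open subset of $\mathbb{R}^3$ as the quadrilateral varies (the paper's parameters $(a,b,c,d)$ submerse onto the interior angles), this is locally constant only if $m_0=0$, $m_3=0$, $m_2+m_3=0$, hence all $m_s=0$. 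This triangular system is the clean bookkeeping you asked for. Two cosmetic points: the zero set of the condition is a countable union of parallel hyperplanes rather than a single affine subspace (this still has empty interior, so the conclusion stands), and in the ``if'' direction one should note that the translation is nonzero at the witness polygon in $O(W)$ so that the translation direction is well defined nearby.
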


For example, the orbit type $012021$ is not stable, because $1$ appears twice in even positions (and does not appear at odd position). Also, $012012$ is stable.

Now we shall restate our main theorems more precisely. 

\begin{Def}
For $\varepsilon>0$, a quadrilateral with parameter $(a_1,a_2,a_3,a_4)$ is called $\varepsilon$-near  square if $\left|a_i-\frac{\pi}{4}\right|<\varepsilon$ for any $i$.
\label{OnlyDefinition}
\end{Def}

\begin{Theo}
The set of all $\frac{\pi}{107}$-near square quadrilaterals is covered by finitely many orbit tiles.
\label{Thrm:epsilon107}
\end{Theo}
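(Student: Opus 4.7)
The plan is a compactness argument. The set of $\frac{\pi}{107}$-near square quadrilaterals lies inside the compact box $B = \{(a_1,a_2,a_3,a_4) : |a_i - \pi/4| \leq \pi/107\}$, and by Lemma~\ref{Lem:Stable Odd Even} every stable orbit type $W$ has an open tile $O(W)$ in $\mathcal{Q}$. Hence it suffices to exhibit, for every point $q \in B$, a single stable orbit type $W_q$ with $q \in O(W_q)$; taking a finite subcover of the resulting open cover of $B$ then proves the theorem.

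The main construction is thus a finite family $\mathcal{F}$ of stable orbit types that together work for every quadrilateral in $B$. The natural candidates come from the square itself: for the square, every trajectory of rational slope $p/q$ in a standard unfolding onto the plane is periodic, and by the symmetries of the square the associated word---doubled if it has odd length---has each of the four sides appearing equally often in even and odd positions. By Lemma~\ref{Lem:Stable Odd Even} these candidate words $W_{p,q}$ are therefore stable orbit types, and the square lies in each $O(W_{p,q})$. For a quadrilateral $q \in B$ and a chosen $W_{p,q}$, Lemma~\ref{Lemma:UnfoldingLemma} reduces the question whether $q \in O(W_{p,q})$ to two conditions: (i) the first and last polygons in $U(W_{p,q}, q)$ are translates, which is a single linear constraint on the angles, and (ii) a straight line in the translation direction fits strictly inside the unfolding and avoids every vertex. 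I would analyse (i) and (ii) for each candidate word to extract an explicit open slab around the square contained in each tile.

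The hard part is verifying that a short finite list of slopes $(p,q)$ already covers all of $B$. The real obstruction to $q$ lying in $O(W_{p,q})$ comes from condition (ii): as the quadrilateral is tilted away from the square, vertices of the unfolded polygons drift into the strip of admissible trajectories. I expect to bound, for each chosen $W_{p,q}$, the worst-case angular perturbation that still preserves admissibility, and then to match these bounds against the constant $\pi/107$. Long unfoldings are combinatorially intricate and small reflection errors accumulate, so the main technical challenge is to control the relevant vertex positions uniformly over $B$ and to show that small perturbations in different ``directions'' of $\mathcal{Q}$ are absorbed by different tiles in $\mathcal{F}$. The specific value $107$ presumably emerges as the result of optimizing these vertex estimates over a carefully chosen, small list of slopes.
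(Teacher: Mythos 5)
Your compactness reduction has a fatal gap: it requires that every $\frac{\pi}{107}$-near square quadrilateral lie in some \emph{open} (hence stable) orbit tile, and this is false. The most important counterexample is the square itself. Your proposed witnesses make the failure concrete: the rational-direction orbits of the square, doubled, do \emph{not} satisfy the parity condition of Lemma~\ref{Lem:Stable Odd Even}. The vertical bouncing orbit gives $0202$, in which side $0$ occupies positions $1$ and $3$, both odd; the slope-one orbit gives $01230123$, in which side $0$ occupies positions $1$ and $5$, again both odd. So the claim that ``by the symmetries of the square'' these words are balanced is simply wrong, your family $\mathcal{F}$ consists of unstable words whose tiles are not open, and the open cover of the box $B$ never materializes. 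Indeed, if the square lay in \emph{any} stable tile, that single open tile would already cover a neighborhood of the square and the theorem would be immediate; the entire difficulty of the paper is that it does not. Any correct cover must therefore include non-open tiles for the degenerate strata (rectangles, quadrilaterals with a right angle, and certain critical lines of angle relations), and once non-open sets are involved, finiteness cannot be extracted by a subcover argument --- it has to be exhibited by hand.

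The paper's proof is organized around exactly the case analysis your plan skips: a near-square quadrilateral is classified by which of its angles are acute, right, or obtuse (encoded as a point of an $\alpha$--$\beta$ parameter plane cut into eight regions), and each region is assigned a specific orbit type that does not exist in the square at all --- the Fagnano word $012012$ when two adjacent angles are acute, $01203213$ when two opposite angles are acute, the $X_2$ and $X_3$ words when one acute angle is flanked by two obtuse ones --- together with the unstable words $0202$, $012021$, $Y_2$, $Y_3$ covering the lower-dimensional boundary strata. The square lies in the closure of the stable tiles but in none of them. Accordingly, the constant $107$ does not come from optimizing vertex estimates over a list of slopes; it is the near-squareness threshold $\varepsilon_3$ required to make the $X_3$ unfolding estimates work. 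To salvage your approach you would need to (a) abandon the square's own orbits as candidates and instead find stable words adapted to each acute/obtuse pattern of the angles, and (b) replace the compactness step by an explicit finite decomposition that also accounts for the quadrilaterals lying in no open tile.
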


\begin{Theo}
An open neighborhood of the rectangle line in the space $\mathcal{Q}$ can be covered by finitely many orbit tiles.
\label{Thrm:Rectangle Cover}
\end{Theo}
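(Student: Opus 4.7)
My plan is to combine a local covering argument at each rectangle with a global argument that handles the non-compactness of the rectangle line $L$. First, for each rectangle $r \in L$, I would exhibit a finite family of stable orbit types whose tiles together cover an open neighborhood of $r$ in $\mathcal{Q}$. This is the analogue, at an arbitrary rectangle, of Theorem~\ref{Thrm:epsilon107}: one picks periodic billiard paths in $r$ of rational slope relative to the sides, doubles them if necessary to meet the balance condition of Lemma~\ref{Lem:Stable Odd Even}, and invokes the openness of $O(W)$ for stable $W$ to produce tiles that each contain an open neighborhood of $r$.

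The rectangle line is a non-compact open interval, so direct compactness does not apply. Two devices shrink the problem. First, the involution $a \leftrightarrow \pi/2 - a$ on the rectangle line corresponds to a $90^\circ$ relabeling of the rectangle's sides; it permutes orbit types in a predictable way, so it suffices to cover the half-segment $a \in (0, \pi/4]$. Second, for any $\varepsilon > 0$, the sub-segment $[\varepsilon, \pi/4]$ of the rectangle line is compact, so the local covers from the first step yield a finite subcover of it by a standard open-cover argument.

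The main obstacle is the tail $a \in (0, \varepsilon)$, corresponding to increasingly thin rectangles. A priori, each aspect ratio in this range can require a distinct stable orbit type, which threatens an infinite cover. My plan is to exhibit a single finite family of stable orbit types of the form ``many oscillations between the long sides, closed up by a short return segment on the short sides'', constructed so that their orbit tiles collectively cover all sufficiently thin rectangles. Verifying the straight-line condition of Lemma~\ref{Lemma:UnfoldingLemma} for this family, uniformly in the aspect ratio as $a \to 0$, is the technical heart of the proof, since the unfolding is long and degenerates as $a \to 0$. Once the tail is covered, the union of the finitely many tiles (tail + compact middle + symmetric image) is an open set in $\mathcal{Q}$ containing $L$, yielding the desired open neighborhood.
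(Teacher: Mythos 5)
Your local step contains the essential gap. You propose to take periodic billiard paths of rational slope in the rectangle $r$, ``double them if necessary to meet the balance condition of Lemma~\ref{Lem:Stable Odd Even}'', and then invoke openness of $O(W)$ to get tiles each containing an open neighborhood of $r$. But no rectangle lies in the interior of any orbit tile, and this is precisely the difficulty the whole paper is organized around. Concretely: a periodic path of slope $p/q$ (in lowest terms) in a rectangle hits each horizontal side $p$ times and each vertical side $q$ times in a word of length $2(p+q)$; since $\gcd(p,q)=1$, at least one of $p,q$ is odd, so the corresponding pair of sides appears an odd number of times and cannot be split equally between odd and even positions, violating the criterion of Lemma~\ref{Lem:Stable Odd Even}. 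Doubling does not repair this: the word already has even length, so positions in the second copy have the same parities as in the first and the (unequal) counts merely double. (Doubling only helps for odd-length words, as with the Fagnano word $012$.) The correct local statement is the one proved for the square in Theorem~\ref{Thrm:epsilon107}: the rectangle sits on the \emph{boundary} of several tiles --- $F$, $R$, $A$, $X_2$, $X_3$, $Y_2$, $Y_3$, together with the unstable rectangle orbit $0202$ --- and only their \emph{union} contains a neighborhood of $r$. A single-tile openness argument cannot substitute for that analysis.

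Second, your global architecture (symmetry reduction, compactness of the segment $[\varepsilon,\pi/4]$, and a new orbit family for the thin tail) is solving a problem that does not arise. The paper's proof observes that the \emph{same} finite list of orbit types covers a neighborhood of every rectangle $K$: the case analysis in the $\alpha$--$\beta$ plane, including the inequality $\alpha+2\beta\leq\frac{3\pi}{2}$ that confines the remaining cases to the regions handled by $X_2,X_3,Y_2,Y_3$, uses only the angle sum of the quadrilateral and never the aspect ratio of $K$. Only the required nearness to $K$ depends on $K$ (the flatter the rectangle, the smaller the neighborhood), and the theorem asks merely for \emph{some} open neighborhood of the rectangle line, which is allowed to pinch as the rectangle degenerates. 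So one takes the union over all $K$ of these shrinking neighborhoods: it is open, contains the line, and is covered by the one fixed finite family; no compactness argument, no symmetry reduction, and no separate tail construction are needed. Your worry that each aspect ratio might demand its own orbit type is reasonable a priori, but the resolution is the uniformity of the covering family --- and any tail construction you did attempt would run into the same obstruction as above, since thin rectangles themselves still admit no stable orbit types and would again have to be covered by a bouquet of tiles meeting along their boundaries.
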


%%%%%		%
	%		%
%%%%%		%
	%		%
%%%%%  % 	%
\section{ Proof of Theorem ~\ref{Thrm:epsilon107}}
%%%%%%%%%%%%%%%%%%%%%%%%%
\subsection{$\alpha$-$\beta$ plane}

\begin{figure}[h!]
\begin{center}
\subfloat[]{\includegraphics[width=3.5cm]{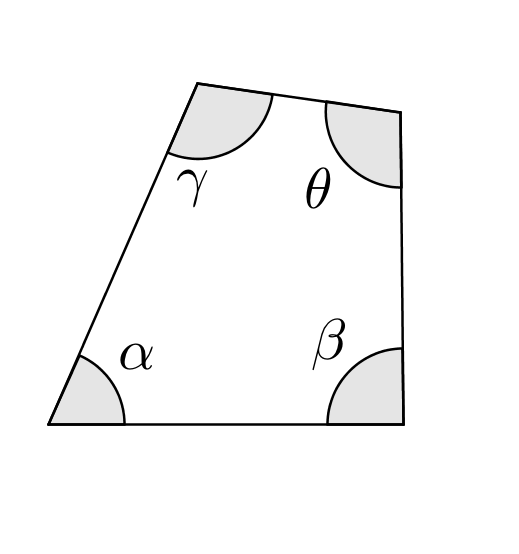}\label{fig:4angle}}
\subfloat[]{\includegraphics[width=4.5cm]{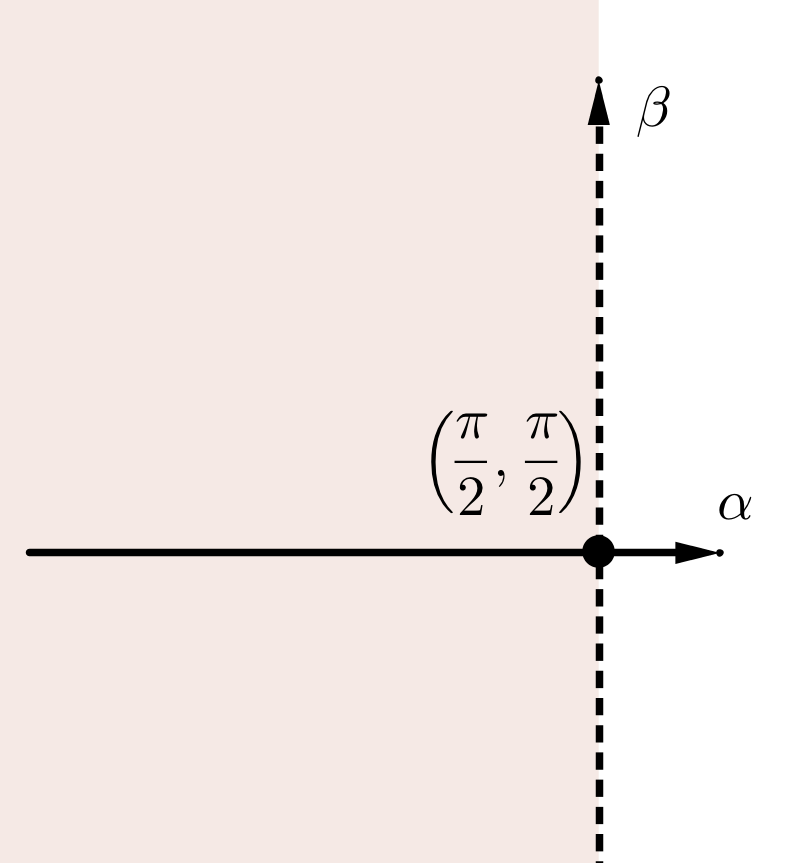}\label{fig:lefthalfplane}}
\subfloat[]{\includegraphics[width=4.5cm]{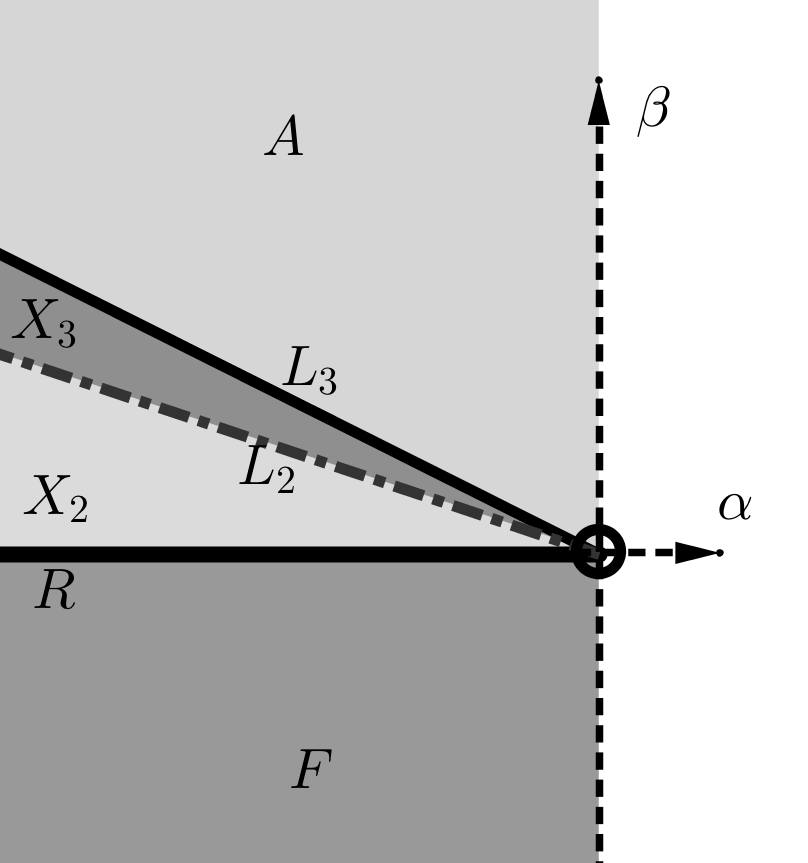}\label{fig:choplefthalf}}
\end{center}
\vspace{-10pt}
\caption{}
\vspace{-10pt}
\end{figure}

In this section, we will introduce a way of thinking about quadrilaterals. This will help us prove Theorem ~\ref{Thrm:epsilon107}.

Let $q$ be a quadrilateral. If $q$ is not rectangle, then we can choose an acute angle $\alpha$. For the two angles adjacent to $\alpha$, we pick the smaller one (or either one, if they are equal), and call it $\beta$, as shown in Figure ~\ref{fig:4angle}. If $q$ is a rectangle, then we let $\alpha=\frac{\pi}{2}, \beta=\frac{\pi}{2}$. Then, every quadrilateral will be characterized by some point on the $\alpha$-$\beta$ plane, which consists of the left half plane $\left(\alpha < \frac{\pi}{2}\right)$ and the origin $\left(\alpha=\frac{\pi}{2},\beta=\frac{\pi}{2}\right)$, as shown in Figure ~\ref{fig:lefthalfplane}.

Note that a quadrilateral $q$ being close to the $\left(\frac{\pi}{2},\frac{\pi}{2}\right)$ point on the $\alpha$-$\beta$ plane does not tell anything about the ``near-squareness'' of the quadrilateral. For example, the point $\left(\frac{\pi}{2},\frac{\pi}{2}\right)$ represents the set of all rectangles (including those rectangles that are not ``near-square'' at all). 

To prove Theorem ~\ref{Thrm:epsilon107}, we chop the left half plane into 8 regions. We show that an $\frac{\pi}{107}$-near square $q$ has a periodic billiard path whose orbit type only depends on the region.

\begin{center}
\begin{tabular}{|l|c|c|c|}
\hline
Region in $\alpha$-$\beta$ plane & in Figure ~\ref{fig:choplefthalf} &orbit type & note 
\\ \hline $\left(\frac{\pi}{2},\frac{\pi}{2}\right)$ & origin & $0202$ & Rectangles
\\ \hline $\alpha<\frac{\pi}{2},\beta<\frac{\pi}{2}$ & $F$ & $012012$ & Section ~\ref{sec:acute}
\\ \hline $\alpha<\frac{\pi}{2},\beta=\frac{\pi}{2}$ & $R$ &$012021$ & Section ~\ref{sec:right}
\\ \hline $\alpha<\frac{\pi}{2},\alpha+2\beta>\frac{3}{2}\pi$ & $A$ &$01203213$ &  Section ~\ref{sec:A}
\\ \hline $\alpha<\frac{\pi}{2},\alpha+3\beta<2\pi, \beta>\frac{\pi}{2}$ & $X_2$ & $0(13)^2013131310(31)^203$  & Section ~\ref{sec:X}
\\ \hline $\alpha<\frac{\pi}{2},\alpha+3\beta>2\pi,\alpha+2\beta<\frac{3}{2}\pi$ & $X_3$ & $0(13)^30(13)^2131(31)^20(31)^303$ & Section ~\ref{sec:X}
\\ \hline  $\alpha<\frac{\pi}{2},\alpha+3\beta=2\pi$ & $L_2$ & $01313103$ & Section ~\ref{sec:Y}
\\ \hline  $\alpha<\frac{\pi}{2},\alpha+2\beta = \frac{3}{2}\pi$ & $L_3$ & $0131313103$ & Section ~\ref{sec:Y}
\\ \hline
\end{tabular}
\end{center}

%%%%%	 %%%%%
	 %		  %
%%%%%	 %%%%%
	 %	 %
%%%%%  %  %%%%%
\subsection{Adjacent acute angles}
\label{sec:acute}

In this section, we will prove the following:

\begin{Prop}
If $q$ is $\frac{\varepsilon}{12}$-near square, and it has two adjacent acute angles, then $q$ has a periodic billiard path.
\label{Prop:2acute}
\end{Prop}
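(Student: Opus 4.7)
The plan is to apply the Unfolding Lemma~\ref{Lemma:UnfoldingLemma} to $W = 012012$ by mimicking the classical Fagnano construction for acute triangles. Stability of $W$ is immediate from Lemma~\ref{Lem:Stable Odd Even}: each of $0$, $1$, and $2$ appears exactly once in an odd and once in an even position, and $3$ does not appear at all. Condition~1 of the Unfolding Lemma asserts that the first and last polygons in $U(W,q)$ are related by a translation; this amounts to checking that the composition of the six edge-reflections is orientation-preserving with zero net rotation, which is a short direct calculation in the plane using the explicit form of the successive reflections.

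To produce the straight segment required by Condition~2, I extend the two sides of $q$ adjacent to the given acute angles (sides $0$ and $2$, which share side $1$ between them) until they meet at an apex $V$. The two acute angles are close to $\pi/2$ by the near-square hypothesis, so the angle at $V$ is small and the enclosing triangle $T \supset q$ (with vertices $V$ and the two endpoints of side $1$) is acute and elongated. The classical Fagnano orbit of $T$ realizes the orbit type $012012$ in the labels inherited from $q$, and the corresponding straight line in the unfolding $U(W,T)$ is the natural candidate segment to pull back into $U(W,q)$.

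It remains to show this Fagnano line actually stays inside every reflected copy of $q$, i.e.\ that it crosses sides $0$, $1$, $2$ only at points belonging to the actual quadrilateral $q$ and not on the added extensions, and that it avoids every vertex. Two of the three altitude feet lie on sides $0$ and $2$ near the endpoints of side $1$ (the acute corners of $q$), while the third foot lies on side $1$ itself. When $q$ degenerates to a rectangle, these feet collapse onto the acute corners and onto side $1$, giving the well-known perpendicular orbit $0202$; for a near-square $q$ a first-order perturbation estimate shows the feet move by $O(\varepsilon)$ from these limiting positions. The main obstacle is precisely this quantitative perturbation: turning the angle bound $\tfrac{\varepsilon}{12}$ into strict separation between the candidate segment and every vertex $a_i$, $b_j$ in the sense of Lemma~\ref{Lemma:UnfoldingLemma}, in particular the vertices of $q$ incident to side $3$. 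Once this geometric inequality is checked, Lemma~\ref{Lemma:UnfoldingLemma} immediately delivers a periodic billiard path of orbit type $012012$.
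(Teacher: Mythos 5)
Your construction is the same one the paper uses: extend the two sides flanking the adjacent acute corners until they meet at an apex, obtain an acute, elongated triangle $T\supset q$, take its Fagnano orbit of type $012012$, and argue that near-squareness forces this orbit to stay inside $q$. The problem is that your write-up stops exactly where the content begins. The step you yourself label ``the main obstacle'' --- showing that the two altitude feet on sides $0$ and $2$ land on the actual edges of $q$ rather than on the added extensions toward the apex --- is the entire substance of Proposition~\ref{Prop:2acute}, and it is left as an unproved claim. An appeal to a ``first-order perturbation estimate'' showing the feet move by $O(\varepsilon)$ could at best establish the result for all \emph{sufficiently} near-square $q$, with no control over the constant; it cannot produce the explicit threshold $\tfrac{\pi}{12}$ in the statement, and nowhere do you convert the hypothesis on the four parameters into a concrete inequality.

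The paper closes this gap by a reduction you should make explicit: the foot of the perpendicular from an acute corner onto the opposite (extended) side lies inside the corresponding edge of $q$, rather than beyond the obtuse corner, if and only if the angle $\theta_i$ marked at that obtuse corner in Figure~\ref{fig:Fagnano bound} is acute. So the whole proposition reduces to $\theta_1,\theta_2<\tfrac{\pi}{2}$. One of these angles is read off directly from the four-parameter description along one diagonal (it is within $2\varepsilon$ of $\tfrac{\pi}{4}$), while the other is controlled by Lemma~\ref{Lem:3 Times Sigma}, which transfers the $\varepsilon$-bound on the parameters of one diagonal to a $3\varepsilon$-bound on the angles cut by the other diagonal; with $\varepsilon=\tfrac{\pi}{12}$ both angles stay below $\tfrac{\pi}{2}$. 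A secondary point: the full vertex-separation analysis in the unfolding that you propose for Condition~2 of Lemma~\ref{Lemma:UnfoldingLemma} is more machinery than is needed. Since the Fagnano path is already a genuine periodic billiard path of the convex triangle $T$ and $q\subset T$ is convex, it suffices to check that the three bounce points lie in the interiors of the corresponding edges of $q$ (the third foot is automatically on side $1$); Condition~1 is likewise inherited from $T$ for free, so no computation with the six reflections is required.
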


\begin{figure}[h]
\centering
\vspace{-15pt}
\subfloat[]{\includegraphics[width=5cm]{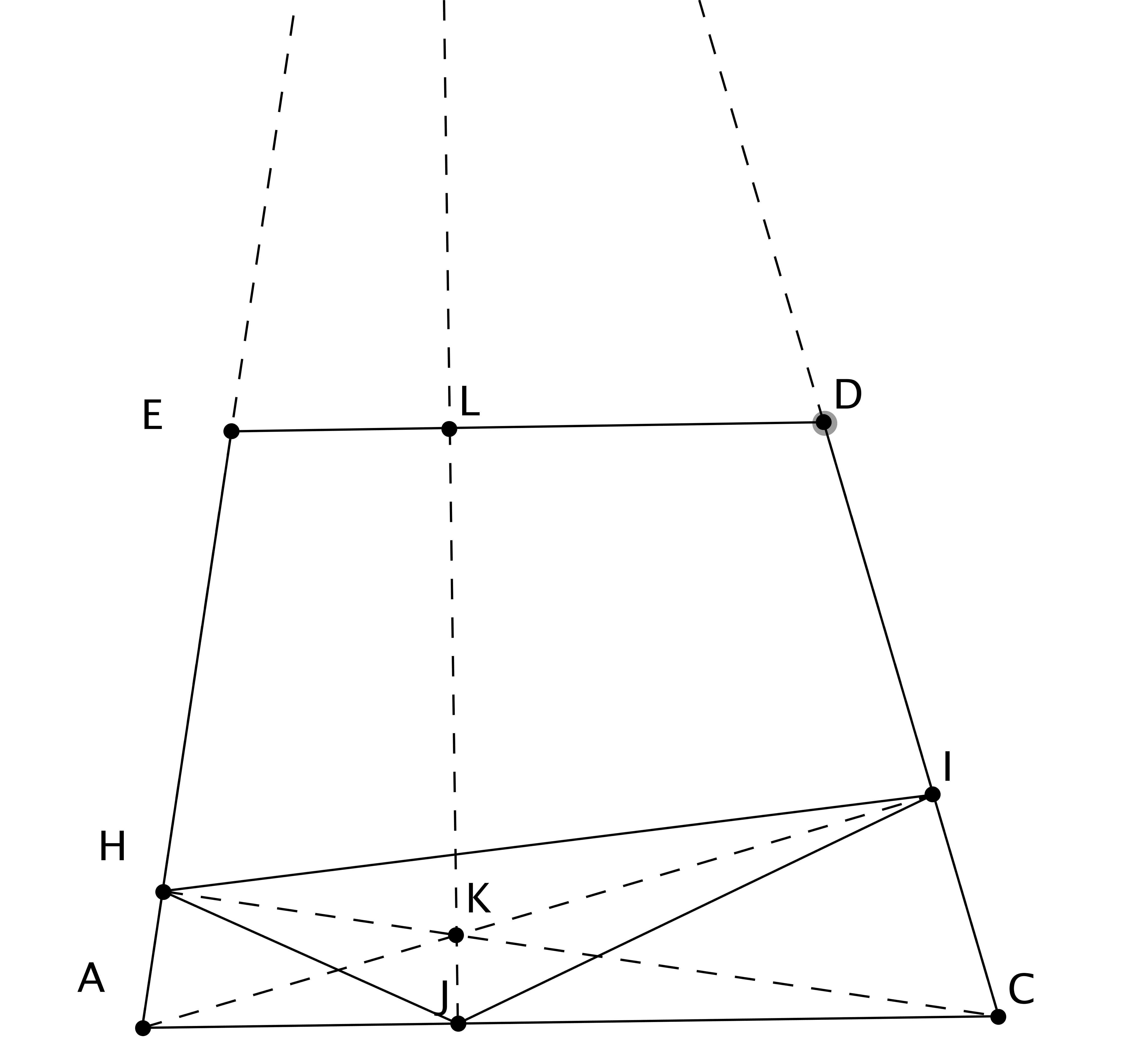}\label{fig:Fagnano Near Square}}
\subfloat[]{\includegraphics[width=6cm]{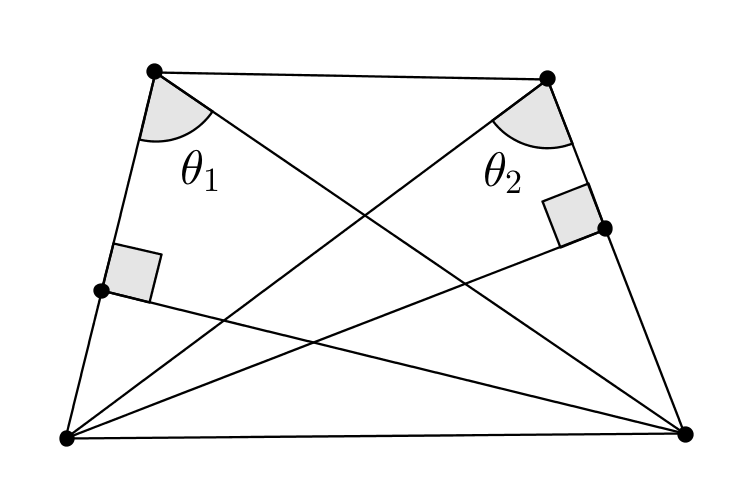}\label{fig:Fagnano bound}}
\vspace{-5pt}
\caption{}
\vspace{-5pt}
\end{figure}

We know that in every acute triangle, there's a periodic billiard path, the Fagnano orbit. The orbit type is 012012. Now suppose a near square have two adjacent acute angles. Then we can complete this near square to an acute triangle, and then find the Fagnano orbit, as shown in Figure~\ref{fig:Fagnano Near Square}. Then since the quadrilateral is near square, two of the altitudes have to be really flat, so the Fagnano orbit stays inside this near square. This gives a periodic billiard path of the near square, and let us give this orbit a name, orbit F.

\begin{wrapfigure}{r}{4cm}
\vspace{-30pt}
  \begin{center}
    \includegraphics[width=4cm]{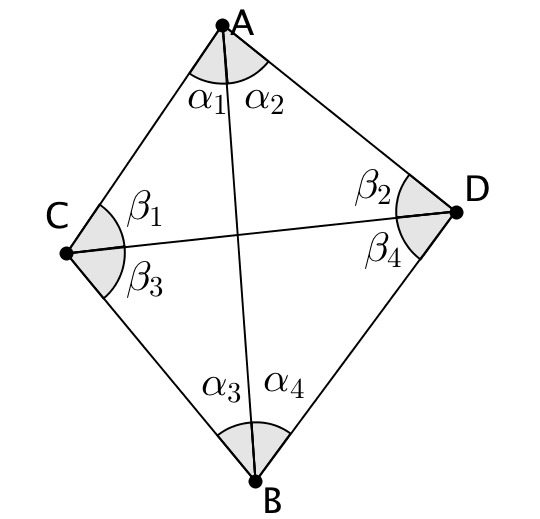}
  \end{center}
  \caption{}
\label{fig:4alpha4beta}
\vspace{-40pt}
\end{wrapfigure}

So, the central question now is how near square this path require. Let us consider Figure~\ref{fig:Fagnano bound}. The critical condition is clearly that $\theta_1$ and $\theta_2$ as shown are smaller than $\frac{\pi}{2}$. We will need the following:

\begin{Lemma}

Let $ABCD$ be a quadrilateral as shown in Figure~\ref{fig:4alpha4beta}. Suppose $|\alpha_i-\frac{\pi}{4}|<\varepsilon$ for any $i$, and $\varepsilon<\frac{\pi}{12}$, then $|\beta_j-\frac{\pi}{4}|<3\cdot\varepsilon$ for any $j$.
\label{Lem:3 Times Sigma}
\end{Lemma}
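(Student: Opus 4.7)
The plan is to express the $\beta_j$ as smooth implicit functions of $(\alpha_1,\alpha_2,\alpha_3,\alpha_4)$, linearize at the square (where all eight half-angles equal $\pi/4$), and control the nonlinear error using the hypothesis $\varepsilon<\pi/12$. Writing $\alpha_i=\frac{\pi}{4}+a_i$ and $\beta_j=\frac{\pi}{4}+b_j$, the goal becomes $|b_j|<3\varepsilon$.

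First I would set up the defining system. The angle sums in the four triangles carved out by the two diagonals give three independent linear relations
\[
b_1+b_3=-a_1-a_2,\qquad b_2+b_4=-a_3-a_4,\qquad b_1+b_2=-a_2-a_3
\]
(the fourth, $b_3+b_4=-a_1-a_4$, is a consequence). For the remaining nonlinear relation, I would apply the law of sines in triangles $ABC,ACD$ to compute $AB,AD$ (normalizing $AC=1$) and then again in $\triangle ABD$, obtaining
\[
\frac{\sin\beta_1}{\sin\beta_3}=\frac{AD}{AB}=\frac{\sin\alpha_4\,\sin(\alpha_2+\alpha_3)}{\sin\alpha_3\,\sin(\alpha_1+\alpha_4)}.
\]
At the square both sides equal $1$, and since $\sin(\tfrac{\pi}{2}+x)=\cos x=1+O(x^2)$, the first-order expansion of this identity is simply $b_1-b_3=a_4-a_3$. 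Solving the resulting $4\times 4$ linear system yields each $b_j$ as half of one $a_i$ minus the sum of the other three, e.g.\ $b_1^{\text{lin}}=\tfrac{1}{2}(a_4-a_1-a_2-a_3)$; in particular $|b_j^{\text{lin}}|\le 2\varepsilon$, leaving an $\varepsilon$ margin.

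The last step will be to show the higher-order terms occupy no more than this margin. On the box $\{|a_i|\le \pi/12\}$ every angle that appears (the $\alpha_i$, the full angles $\alpha_i+\alpha_{i'}$, and the $\beta_j$ themselves) lies strictly inside $(\pi/6,2\pi/3)$, so every sine appearing in a denominator is bounded below by $\sin(\pi/6)=\tfrac{1}{2}$. The implicit function theorem then gives $b_j$ as a smooth function of $(a_1,\ldots,a_4)$ on this box with a uniformly bounded Jacobian, and a single application of Taylor's theorem yields a quadratic remainder bounded by $C\varepsilon^2$ for an explicit constant $C$; the hypothesis $\varepsilon<\pi/12$ is tailored so that $C\varepsilon^2<\varepsilon$. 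I expect the main obstacle to be pinning down this constant $C$ tightly enough to close the gap. A cleaner route is likely to compute the implicit derivative $\partial b/\partial a$ directly from the four defining equations and bound it uniformly on the box via the mean-value inequality, avoiding second-order expansions altogether; the remaining estimate reduces to controlling quotients of sines and cosines whose arguments are all bounded away from their bad values.
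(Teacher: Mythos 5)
The paper offers no proof of this lemma at all---it says only that the argument is ``trigonometric (using Sine Rule) and some estimates using Mathematica'' and leaves it to the reader---so there is no detailed argument to compare yours against. Your reduction is sound and is essentially an organized version of what the paper gestures at: the angle-sum relations in the four triangles cut off by the diagonals, together with one multiplicative sine-rule identity, do determine the $\beta_j$ implicitly (the constraint has nonvanishing derivative along the one-dimensional kernel of the linear relations, since $\cot\beta_1+\cot\beta_3=\sin(\beta_1+\beta_3)/(\sin\beta_1\sin\beta_3)>0$); the linearization at the square is computed correctly, because the full angles $\alpha_i+\alpha_{i'}$ sit at $\pi/2$ where $\cot$ vanishes and so drop out at first order; and the resulting bound $|b_j^{\mathrm{lin}}|\le 2\varepsilon$ is sharp, attained when three of the $a_i$ equal $-\varepsilon$ and one equals $+\varepsilon$. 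At that corner with $\varepsilon=\pi/12$ (three half-angles equal to $\pi/6$, one equal to $\pi/3$) one finds $\tan\beta=3\sqrt{3}$, i.e.\ $\beta\approx 79.1^\circ$, so the nonlinear correction is about $4^\circ$ against the available $15^\circ$ margin---consistent with the lemma, at least at this point.

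The gap is that the entire nontrivial content of the lemma---the uniform bound on the nonlinear remainder over the whole box $|a_i|<\pi/12$---is asserted rather than proved. Since $\varepsilon$ may be as large as $15^\circ$, this is not a ``for $\varepsilon$ sufficiently small'' statement: ``the remainder is $O(\varepsilon^2)$'' proves nothing until you exhibit a constant $C$ with $C\varepsilon<1$, i.e.\ $C<12/\pi\approx 3.82$, or, in your mean-value variant, verify $\sum_i\left|\partial b_j/\partial a_i\right|\le 3$ over the entire box rather than only at the center, where it equals $2$. There is also a minor circularity to repair: you bound the denominators by placing the $\beta_j$ in $(\pi/6,2\pi/3)$, which is close to the conclusion being proved; an a priori bound or a connectedness/bootstrap argument starting from the square is needed to justify this. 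These computations are exactly what the authors delegate to Mathematica, so your proposal reproduces the paper's plan faithfully but, like the paper, stops short of the one estimate that actually establishes the lemma.
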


The proof of this lemma is trigonometric (using Sine Rule) and some estimates using Mathematica. We will leave the proof of this lemma to the reader. 

\begin{proof}[Proof of Prop ~\ref{Prop:2acute}]
Suppose quadrilateral $q$ is $\frac{\pi}{12}$ near square. Let $\theta_1$ and $\theta_2$ be the angle shown in figure ~\ref{fig:Fagnano bound}. By the above discussion, it suffice to show that $\theta_i<\frac{\pi}{2}, i=1,2$. Depending on the choice of diagonal used in the parameterization, one of the two angles $\theta_i$ satisfies $\left|\theta_i -\frac{\pi}{4}\right|<\frac{\pi}{2}$. By the lemma, the other angle satisfies $\left|\theta_j -\frac{\pi}{4}\right|<\frac{\pi}{4}$
\end{proof}
\subsection{Acute angle adjacent to a right angle}
\label{sec:right}
\begin{wrapfigure}{r}{4cm}
\vspace{-50pt}
  \begin{center}
    \includegraphics[width=4cm]{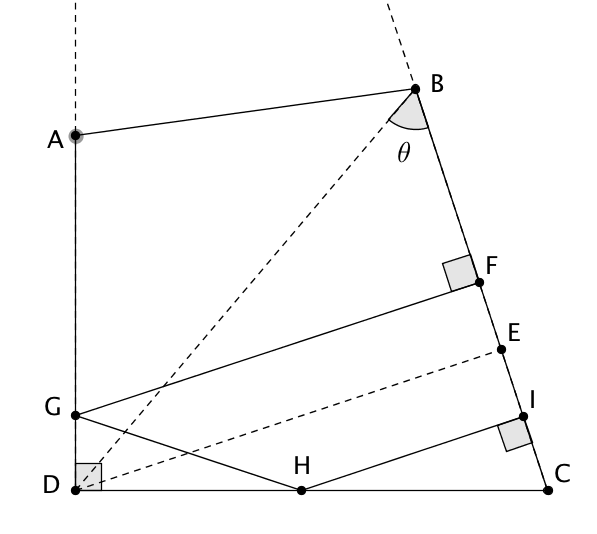}
  \end{center}
  \caption{}
\label{fig:Acute Right}
\vspace{-90pt}
\end{wrapfigure}

%%%%%	 %%%%%
	 %		  %
%%%%% 	 %%%%%
	 %	 	  %
%%%%%  %  %%%%%
%
\begin{comment}
\begin{figure}[h]
\centering
\vspace{-15pt}
\includegraphics[width=5cm]{Picture7}
\caption{}
\label{fig:Acute Right}
\end{figure}
\end{comment}
%
\begin{Prop}
If $q$ is $\frac{\pi}{12}$-near square, and it has a right angle adjacent to an acute angle, then $q$ has a periodic billiard path.
\label{Prop:AcuteRightQuad}
\end{Prop}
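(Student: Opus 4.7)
The plan is to mirror the strategy of Proposition~\ref{Prop:2acute}: complete the near-square quadrilateral to a right triangle, apply the classical right-triangle periodic billiard path from Section~2.1 (which is exactly orbit type $012021$), and then verify that this path remains inside the quadrilateral.

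Concretely, let $q=ABCD$ with $\angle B=\tfrac{\pi}{2}$ and acute $\angle A=\alpha<\tfrac{\pi}{2}$. Since $\alpha<\tfrac{\pi}{2}$, the sides $AD$ and $BC$ are not parallel, so their extensions meet at a point $E$ on the opposite side of $AB$ from the interior of $q$. This gives a right triangle $ABE$ with $\angle B=\tfrac{\pi}{2}$, $\angle A=\alpha$, $\angle E=\tfrac{\pi}{2}-\alpha$, containing $q$ with only the small triangle $CDE$ shaved off along the segment $CD$. By Section~2.1, triangle $ABE$ has the periodic orbit of type $012021$: a ball shot perpendicularly from a suitable point on the hypotenuse $AE$ reflects off the leg $AB$, then off the leg $BE$, returns perpendicularly to $AE$, and retraces its path, completing six reflections per period. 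For this to yield an orbit of $q$ it suffices to check that (i) the two reflection points on $BE$ actually lie on the sub-segment $BC$, (ii) the two reflection points on $AE$ lie on $AD$, and (iii) the straight segments of the orbit do not cross $CD$.

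To verify these three conditions I would place $B$ at the origin, $A$ on the positive $x$-axis at distance $|AB|$, and compute the six reflection points of the right-triangle orbit in closed form. Because $\alpha$ is close to $\tfrac{\pi}{2}$ under the near-square hypothesis, one has $|BE|=|AB|\tan\alpha$ very large and the orbit is concentrated within distance $O(\cos\alpha)\cdot|AB|$ of the right-angle vertex $B$. On the other hand, Lemma~\ref{Lem:3 Times Sigma} combined with the $\tfrac{\pi}{12}$-near-square hypothesis forces each angle of $q$ to be within $\tfrac{\pi}{4}$ of $\tfrac{\pi}{2}$, from which a Sine-Rule computation bounds $|BC|/|AB|$, $|AD|/|AB|$, and the perpendicular distance from $AB$ to the line $CD$ below by a positive constant. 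Comparing the two scales yields (i), (ii), (iii) simultaneously.

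The main obstacle, as in Proposition~\ref{Prop:2acute}, is this final quantitative verification: pinning down the exact positions of the six reflection points of the right-triangle orbit and comparing them with the locations of $C$, $D$, and the segment $CD$. It is a routine but finicky trigonometric estimate that is essentially forced by Lemma~\ref{Lem:3 Times Sigma}. A conceptually cleaner alternative is to invoke the Unfolding Lemma~\ref{Lemma:UnfoldingLemma}: unfold $q$ along $012021$, identify the translation direction from the right-triangle model, and verify that the ``top'' vertices of the unfolding sit strictly above the ``bottom'' vertices as in Figure~\ref{fig:topvertices1}. This again reduces to the same angle estimates provided by Lemma~\ref{Lem:3 Times Sigma}.
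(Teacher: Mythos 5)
Your proposal follows essentially the same route as the paper, which proves Proposition~\ref{Prop:AcuteRightQuad} exactly by repeating the argument of Proposition~\ref{Prop:2acute}: complete $q$ to a right triangle, use the $012021$ orbit of Section~2.1, and reduce containment of the orbit in $q$ to an angle condition (the angle $\theta$ of Figure~\ref{fig:Acute Right} being acute) that the $\tfrac{\pi}{12}$-near-square hypothesis supplies, with the final trigonometric estimate left implicit just as in your sketch. Two small inaccuracies in your description, neither fatal to the plan: the apex $E$ lies on the \emph{same} side of $AB$ as the interior of $q$ (otherwise the triangle $ABE$ could not contain $q$), and the orbit is concentrated in a strip of height $O(\cos\alpha)\,|AB|$ along the whole leg $AB$ rather than near the vertex $B$ --- its bounce point on $AB$ may lie anywhere on that leg while its bounce points on the hypotenuse cluster near $A$ --- so the comparison against $C$, $D$ and the segment $CD$ is genuinely quantitative at $\varepsilon=\tfrac{\pi}{12}$ rather than a clean separation of scales.
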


The proof is the same as the proof of proposition ~\ref{Prop:2acute}. In this case, we need to show the angle $\theta$ of Figure ~\ref{fig:Acute Right} is acute.

%%%%%	 % 	  %
	 %	 % 	  %
%%%%% 	 %%%%%
	 %		  %
%%%%%  % 	  %

\subsection{Opposite acute angles}
\label{sec:A}

\begin{figure}[h]
\centering
\includegraphics[width=8cm]{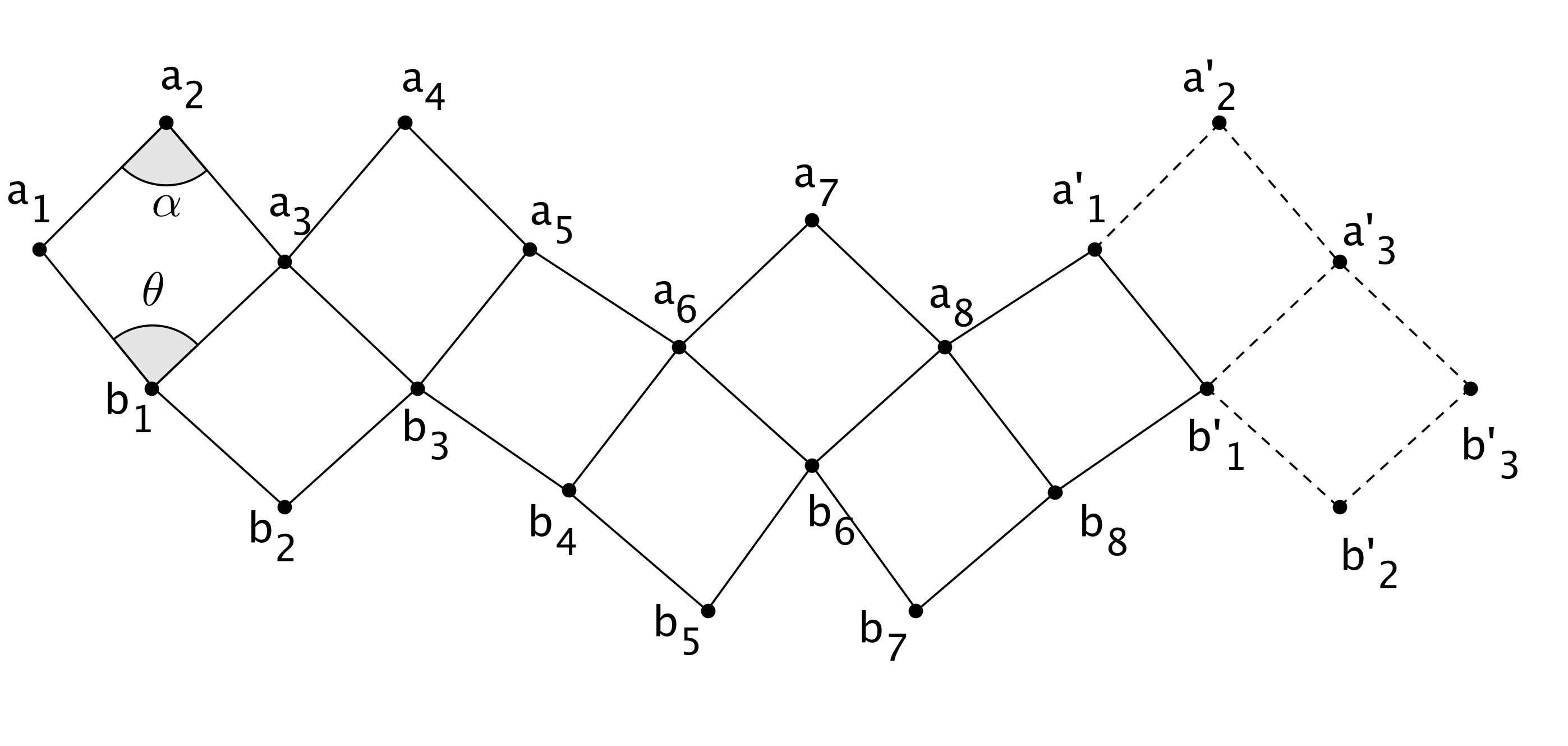}
\vspace{-5pt}
\caption{Orbit A}
\label{fig:Orbit A}
\end{figure}

Here we are going to study a specific periodic billiard path with combinatorial orbit 01203213, whose unfolding is illustrated in Figure ~\ref{fig:Orbit A}. The last two dotted quadrilateral is a translation of the first two quadrilateral. This orbit is clearly stable by Lemma ~\ref{Lem:Stable Odd Even}, and the direction of translation (and thus the direction of the billiard path) is the same as the line connecting $b_1$ and $b_1'$. If we let this be the $x$-axis, and let $y(b_i)$ denote the $y$-coordinate of $b_i$ (similarly for $a_i$), then following the discussion after Figure ~\ref{fig:topvertices3}, we would like to prove:

\begin{comment}

In this section, our main goal is to show that a quadrilateral has this periodic billiard path if it is sufficiently near square, and both $\alpha$ and $\theta$ are acute, and the other two angles are obtuse.

This orbit is clearly stable by Lemma ~\ref{Lem:Stable Odd Even}, and the direction of translation (and thus the direction of the billiard path) is the same as the line connecting $b_1$ and $b_1'$. Let this be the x-axis and let $b_1$ be the origin, then we can form a coordinate frame. Then for a billiard path of this type to exist, we only need to show the the $y$-coordinate of $a_i$ is larger than the $y$-coordinate of $b_j$ for any $i$ and $j$. We denote the $y$-coordinate of a vertex $v$ by $y(v)$. We will prove our main goal in the following form:

\end{comment}

\begin{Prop}
If the quadrilateral is $\frac{\pi}{30}$ near square, $\alpha, \theta$ in Figure ~\ref{fig:Orbit A} are acute, the other two angles are obtuse, then 
\begin{equation*}
\max_{j} y(b_j) < \min_{i} y(a_i)
\end{equation*}
and hence the quadrilateral has this periodic path.
\label{OrbitAProp}
\end{Prop}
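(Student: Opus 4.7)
My plan is to invoke Lemma~\ref{Lemma:UnfoldingLemma}. The word $W = 01203213$ has length $8$ (even), and each label $0,1,2,3$ appears exactly twice, once in an odd position and once in an even position, so $W$ is stable by Lemma~\ref{Lem:Stable Odd Even}. The translation-compatibility of the first and last polygon in the unfolding (Condition 1 of Lemma~\ref{Lemma:UnfoldingLemma}) is a combinatorial consequence of this odd/even balance and therefore holds for every quadrilateral. All that remains is Condition 2, which, as explained after Figure~\ref{fig:topvertices3}, reduces exactly to the stated inequality $\max_j y(b_j) < \min_i y(a_i)$ in coordinates with the translation direction as the $x$-axis.

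I would then write the unfolding down explicitly. Place $Q_0$ with a reference vertex at the origin and a reference side along a fixed ray, so that the remaining three vertices are determined by the four interior angles and (up to scale) one side length. Applying the eight reflections dictated by the letters of $W$ in order produces the chain $Q_0, Q_1, \ldots, Q_8$ by mechanical linear algebra. The translation vector $b_1' - b_1$ can then be rotated to lie along the positive $x$-axis and rescaled to unit length. In these coordinates, list the finitely many upper-boundary vertices $a_i$ and lower-boundary vertices $b_j$ of the strip $\bigcup_k Q_k$; each $y(a_i)$ and $y(b_j)$ is a closed-form trigonometric expression in the four angles of $q$.

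Next I would restrict the parameter region using the hypotheses. Since $q$ is $\pi/30$-near square, the four parameters $a_1,\ldots,a_4$ of Definition~\ref{OnlyDefinition} lie within $\pi/30$ of $\pi/4$, so all four interior angles of $q$ lie within $\pi/15$ of $\pi/2$; Lemma~\ref{Lem:3 Times Sigma} moreover provides $\pi/10$-control on the parameters associated with the other diagonal, which pins down the side-length ratios. The further constraints that $\alpha,\theta$ be acute, that the remaining two angles be obtuse, and (per Region $A$ in Section~3.1) that $\alpha + 2\beta > \frac{3\pi}{2}$, together confine the parameter vector to a compact box of small diameter around $(\pi/4,\pi/4,\pi/4,\pi/4)$, in whose closure lies the square, where each relevant difference $y(a_i) - y(b_j)$ reduces on the clean translation strip to a positive multiple of $\sin(\pi/4)$.

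The main obstacle is promoting this positive margin from a qualitative continuity statement into an explicit quantitative bound valid on the entire compact box. I would attack it in the spirit of Lemma~\ref{Lem:3 Times Sigma}: enumerate the (few) pairs $(a_i, b_j)$ for which a violation of the inequality is \emph{a priori} possible, express $y(a_i) - y(b_j)$ as an explicit trigonometric polynomial in the four angles, and verify by Mathematica-assisted interval estimates that each such difference remains strictly positive on the constrained region. Once these finitely many inequalities are confirmed, Lemma~\ref{Lemma:UnfoldingLemma} delivers the desired periodic billiard path of orbit type $01203213$.
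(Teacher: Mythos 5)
Your skeleton---invoke Lemma~\ref{Lemma:UnfoldingLemma}, note that the balanced word makes Condition~1 automatic, reduce Condition~2 to finitely many vertex comparisons $y(a_i)>y(b_j)$, and then estimate each difference---is exactly the paper's strategy. But the step where you propose to close the argument is where it breaks. You claim that on a compact box around the square each difference $y(a_i)-y(b_j)$ is bounded below by a positive multiple of $\sin(\pi/4)$, so that interval arithmetic can certify strict positivity. That is false for precisely the comparisons that carry the content of the proposition. In the unfolding of $01203213$ the images of the diagonal form an equilateral chain $b_1 b_3 a_6 a_8 b_1'$ whose turning angles are alternately $2\alpha$ and $2\theta$; a short computation (or the paper's Proposition~\ref{Prop:Opposite Acute}) gives, for instance, $y(a_6)-y(b_1)=d\bigl(\sin(\alpha-\theta)+\sin(\alpha+\theta)\bigr)=2d\sin\alpha\cos\theta$ with $d$ the diagonal length. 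This tends to $0$ as $\theta\to\pi/2$ and changes sign beyond it, so at the square itself $b_1,b_3,a_6,a_8,b_1'$ are collinear and the inequality degenerates to equality. The infimum of these differences over your region (whose closure contains $\alpha=\pi/2$ and $\theta=\pi/2$) is $0$, so no Mathematica interval estimate over that region can certify strict positivity. What is needed instead is the exact, sign-level argument the paper supplies: Proposition~\ref{Prop:Opposite Acute} shows that $\min(y(a_6),y(a_8))>\max(y(b_1),y(b_3))$ holds \emph{if and only if} $\alpha$ and $\theta$ are both acute, by reading off the signs of the angular slopes $\alpha-\theta$, $\pi-\alpha-\theta$, $\alpha+\theta-\pi$ from the isosceles triangles in the chain.

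Two smaller points. First, the reduction to ``finitely many'' comparisons is itself nontrivial: the unfolding has $19$ candidate vertices, and the paper's Remark uses convexity plus near-squareness to cut these to six, after which only the three comparisons of Propositions~\ref{Prop:Opposite Acute}, \ref{Prop:3.4.2} and \ref{Prop:3.4.4} remain; your proposal should at least acknowledge this pruning. Second, the one comparison that genuinely is settled by numerics in the paper, $y(a_3)>y(b_6)$ in Proposition~\ref{Prop:3.4.4}, does have a uniform margin ($\approx 3.100<\pi$), but its derivation uses the obtuseness of $\beta$ exactly (the bound is $2p+\tfrac{6\pi}{5}-\beta$ with $\beta>\tfrac{\pi}{2}$), not merely near-squareness; so even the ``numerical'' part of the argument leans on the qualitative hypotheses in a way your compact-box formulation does not capture.
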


Let us start with some observations.

\begin{Remark}
If our quadrilateral is $\frac{\pi}{12}$ near square, then $\max_{j} y(b_j) = \max \{y(b_1),y(b_3),y(b_6)\}$, and $\min_{i} y(a_i) = \min \{y(a_3),y(a_6),y(a_8)\}$
\end{Remark}
\begin{proof}
Since our quadrilateral is convex, we know $y(a_2)$ is definitely larger than the smaller one of $y(a_1)$ and $y(a_3)$, so we don't need to consider the vertex $a_2$. Similarly we can eliminate vertices $a_4, a_7, b_2, b_5, b_7$.

Now suppose that our quadrilateral is $\dfrac{\pi}{12}$-near square, then we know $\angle b_3 a_5 a_6 < \dfrac{2\pi}{3}$, and $\angle a_3 a_5 b_3 < \dfrac{\pi}{3}$. So, we know $\angle a_3 a_5 a_6 < \pi$. So again by convexity, $y(a_5)$ is larger than the smaller one of $y(a_3)$ and $y(a_6)$. In the same fashion, we can eliminate vertices $a_1, b_4, b_8$.
\end{proof}

So, for Proposition ~\ref{OrbitAProp}, all that remains is to show: 
\begin{equation*}
\max \{y(b_1),y(b_3),y(b_6)\} < \min \{y(a_3),y(a_6),y(a_8)\}
\end{equation*}

\begin{comment}
\begin{wrapfigure}{r}{6cm}
\vspace{-20pt}
  \begin{center}
    \includegraphics[width=6cm]{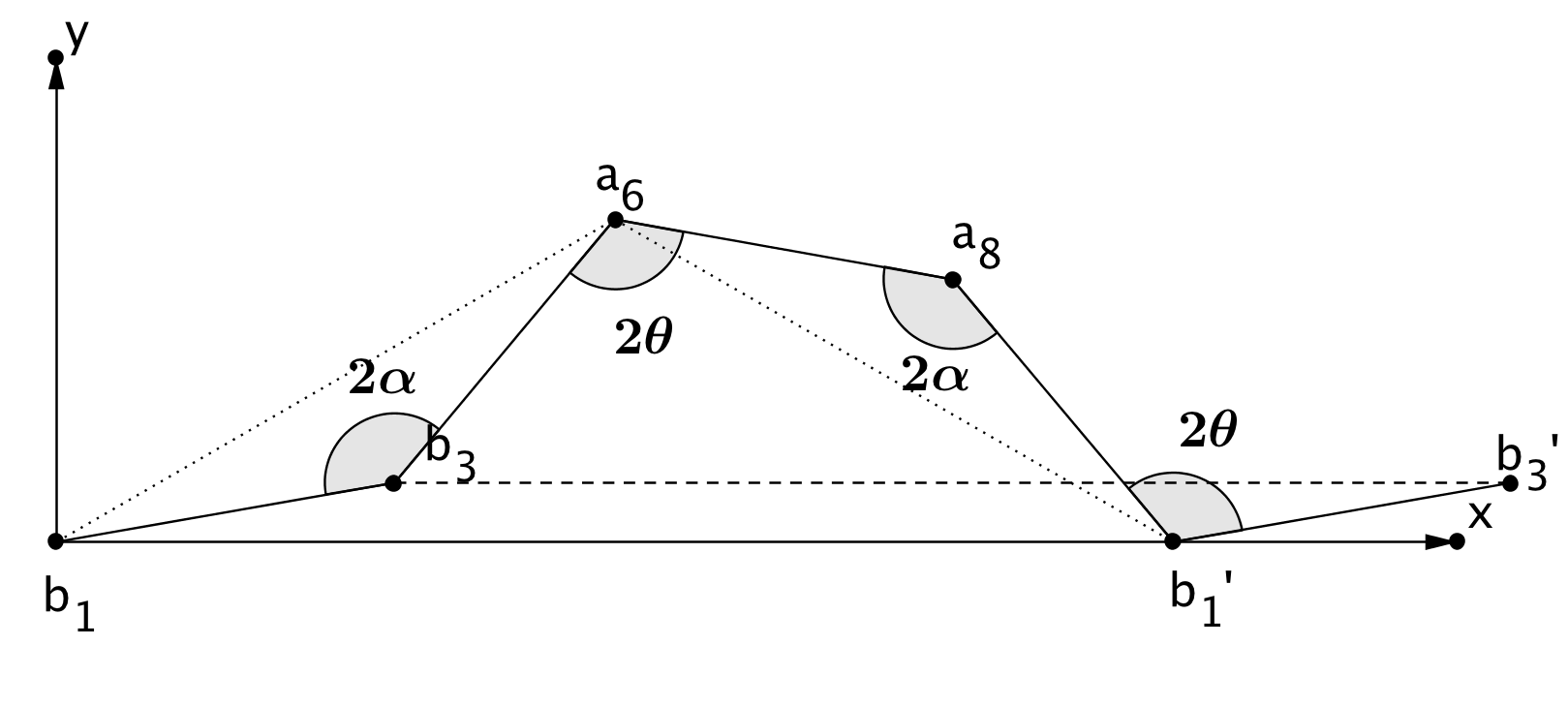}
  \end{center}
  \caption{Exaggerated Figure}
\label{fig:Exaggerated Figure}
\end{wrapfigure}
\end{comment}

Note that $y(a_1')=y(a_1), y(b_1')=y(b_1)$. In this section, we use the word `angular slope' to refer to the angle at which the line meets with the $x$-axis, i.e. `angular slope' $= \tan^{-1} (m)$ with values in $\left(-\frac{\pi}{2},\frac{\pi}{2}\right)$, where $m$ is the gradient of the non-vertical line. 

\begin{Prop}
If the quadrilateral is $\dfrac{\pi}{4}$-near square, then 
$min(y(a_6), y(a_8)) > max(y(b_1), y(b_3))$ if and only if both $\alpha$ and $\theta$ are acute.
\label{Prop:Opposite Acute}
\end{Prop}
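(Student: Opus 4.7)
The plan is to translate the geometric inequality $\min(y(a_6), y(a_8)) > \max(y(b_1), y(b_3))$ into sign conditions on the angular slopes of specific edges in the unfolding $U(01203213, q)$; these slopes are directly computable from the angles of $q$ and the cumulative reflection history along the orbit. First I set up coordinates so that the translation direction $b_1 b_1'$ is the $x$-axis. Since the $\frac{\pi}{4}$-near-square hypothesis forces every angle of $q$ to lie in $(0,\pi)$ and close to $\frac{\pi}{2}$, every edge appearing in the unfolding is nearly horizontal or nearly vertical, and its angular slope is an integer linear combination of the four angles of $q$ (mod $\pi$) determined by the reflections that have been applied so far.

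Second, I reduce each of the four inequalities hidden inside $\min(y(a_6), y(a_8)) > \max(y(b_1), y(b_3))$ to the sign of a net $y$-displacement along a specific boundary arc of the unfolding. For instance, $y(a_6) > y(b_1)$ says that the polygonal path along the top boundary from $b_1$ to $a_6$ climbs in net. Because every edge is nearly horizontal, each edge's contribution to the $y$-displacement has the same sign as its angular slope, and I only need to identify the edge on the arc whose slope is \emph{not} bounded away from $0$ by the near-square condition -- that edge alone controls the sign.

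Third, I identify $\alpha$ and $\theta$ as the two angles whose position relative to $\frac{\pi}{2}$ governs the two critical sign flips. Reading off Figure~\ref{fig:Orbit A}, one expects that exactly one nearly-horizontal edge along the arc from $b_1$ to $a_6$ has angular slope equal to $\frac{\pi}{2}-\alpha$ (up to sign), so that it is positive iff $\alpha$ is acute; symmetrically, one edge along the arc from $b_3$ to $a_8$ has slope $\frac{\pi}{2}-\theta$. All other edges along the four relevant boundary arcs are bounded uniformly away from horizontal by the near-square condition and contribute in a definite direction. Thus the four inequalities all hold iff both critical slopes are positive, i.e.\ iff $\alpha$ and $\theta$ are both acute. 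The reverse implication is automatic: if say $\alpha \geq \frac{\pi}{2}$, the critical edge is horizontal or slopes downward, forcing $y(b_1) \geq y(a_6)$.

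The main obstacle is pure bookkeeping. With eight reflections there are many edges and vertices to track, and the cumulative angle of each edge must be computed without orientation errors, since each reflection flips the sign of the angle of incidence at the reflecting edge. The real substance of the proof is producing a clean table of the angular slopes of all edges incident to $b_1, b_3, a_6, a_8$ on the relevant boundary arcs, verifying that precisely two of them reduce to $\pm(\frac{\pi}{2}-\alpha)$ and $\pm(\frac{\pi}{2}-\theta)$, and checking that every other edge on those arcs has angular slope uniformly bounded away from $0$ in the helpful direction under the $\frac{\pi}{4}$-near-square assumption.
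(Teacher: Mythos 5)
Your overall strategy---reducing the four hidden inequalities to sign conditions on angular slopes in the unfolding---is in the right spirit, but as written it has a genuine gap, and it is not the paper's route. The reduction is only half valid: if you write $y(a_6)-y(b_1)$ as the sum of $y$-displacements along a boundary arc made of several polygon edges, then knowing that one ``critical'' edge has slope whose sign is governed by $\alpha$ while the other edges are ``helpful'' gives you the \emph{if} direction at best, but not the \emph{only if} direction. When the critical edge slopes downward, the remaining edges may still slope upward, and with no control on the \emph{lengths} of the edges the net displacement can have either sign; your closing sentence (``if $\alpha\geq\frac{\pi}{2}$ \dots forcing $y(b_1)\geq y(a_6)$'') is precisely the step that does not follow. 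The paper sidesteps this by not using polygon edges at all: it observes that $b_1, b_3, a_6, a_8, b_1', b_3'$ are joined by five segments that are all copies of the \emph{same diagonal} of the quadrilateral---hence all of equal length---with consecutive angles alternating between $2\alpha$ and $2\theta$. Therefore $\bigtriangleup b_1 a_6 b_1'$ and $\bigtriangleup b_3 a_8 b_3'$ are isosceles triangles with horizontal bases and apex angles $2\theta$ and $2\alpha$, which converts $y(a_6)>y(b_1)$ and $y(a_8)>y(b_3)$ \emph{exactly} into $\theta<\frac{\pi}{2}$ and $\alpha<\frac{\pi}{2}$; only the two cross inequalities $y(a_6)>y(b_3)$ and $y(a_8)>y(b_1)$ are then handled by slope computations, applied to the single diagonals $b_3 a_6$ and $a_8 b_1'$ (angular slopes $\pi-\alpha-\theta$ and $\alpha+\theta-\pi$), not to arcs of several edges. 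The equal-length chain of diagonals is the missing idea in your write-up.

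A second problem is your quantitative input: $\frac{\pi}{4}$-near-squareness only constrains each of the four parameters to $\left(0,\frac{\pi}{2}\right)$, hence each full angle of $q$ to $(0,\pi)$. It does \emph{not} force the angles of $q$ close to $\frac{\pi}{2}$, so the edges of the unfolding need not be nearly horizontal or nearly vertical, and your claim that all non-critical slopes are ``uniformly bounded away from $0$ in the helpful direction'' is unsupported. (In the paper's proof, near-squareness is invoked only once, to ensure the diagonal $b_3 a_6$ is not too steep, i.e.\ $\pi-\alpha-\theta<\frac{\pi}{2}$.) Finally, the substantive part of your plan---the table of cumulative slopes and the identification of exactly two critical edges with slopes $\pm\left(\frac{\pi}{2}-\alpha\right)$ and $\pm\left(\frac{\pi}{2}-\theta\right)$---is deferred rather than carried out, so even the \emph{if} direction is not yet established.
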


\begin{proof}
Let us restrict our attention to $b_1,b_3,a_6,a_8,b_1',b_3'$, as shown in Figure \ref{fig:Pic1}, \ref{fig:Pic2}, \ref{fig:Pic3}, \ref{fig:Pic4}. One can tell from Figure ~\ref{fig:Orbit A} that for the angles \textit{drawn in the Figures below}, $\angle b_1 b_3 a_6 = \angle a_6 a_8 b_1' = 2\alpha$, and $\angle b_3 a_6 a_8 = \angle a_8 b_1' b_3' = 2\theta$. Note that $b_1 b_3, b_3 a_6, a_6 a_8, a_8 b_1', b_1' b_3'$ all have the same length, since they are the same diagonal of the same quadrilateral. Let us connect $b_1 a_6$ and $a_6 b_1'$. Clearly, $\bigtriangleup b_1 a_6 b_3\cong \bigtriangleup b_1' a_6 a_8$ are the same isoceles triangle. So, $ \angle b_1 a_6 b_3 = \angle b_1' a_6 a_8$, and hence $\bigtriangleup b_1 a_6 b_1'$ is an isosceles triangle with top angle $2\theta$ or $2\pi-2\theta$. So if $\theta$ is obtuse or right angled, then $y(a_6)\leq0=y(b_1)$. Similarly, if $\alpha$ is obtuse or right angled, then observe from $\bigtriangleup b_3 a_8 b_3'$ that $y(b_3)\geq y(a_8)$. 

If both $\alpha$ and $\theta$ are acute (see Figure ~\ref{fig:Pic4}), then $y(a_6)>y(b_1)$ and $y(b_3)< y(a_8)$. Hence, all that remains is to show $y(a_6)>y(b_3)$ and $y(a_8)>y(b_1)$. Using the isosceles triangles $\bigtriangleup b_1 a_6 b_1'$ and $\bigtriangleup b_1 b_3 a_6$, we see that the slope of the line $b_1 b_3$ is $\dfrac{\pi-2\theta}{2}-\dfrac{\pi-2\alpha}{2} = \alpha - \theta$. (Note: if $\alpha < \theta$, then $y(b_3)<y(b_1)$. This following argument is still valid after redrawing Figure ~\ref{fig:Pic4} accordingly) 

One can also see that the slopes of $b_3 a_6, a_6 a_8, a_8 b_1', b_1' b_3'$ are $\pi-\alpha -\theta, \theta -\alpha, \alpha +\theta-\pi, \alpha - \theta$ respectively. 

So, to show $y(a_6)>y(b_1)$ and $y(a_8)>y(b_1')$ is equivalent to showing the slope of $b_3 a_6$ is bigger than zero and the slope of $a_8 b_1'$ is less than zero (i.e. $0< \pi-\alpha -\theta<\frac{\pi}{2} $ and $0> \alpha +\theta-\pi>-\frac{\pi}{2} $). Note that $\alpha, \theta < \frac{\pi}{2}$ implies $\pi-\alpha -\theta >0$. Furthermore, if the quadrilateral is $\frac{\pi}{4}$-near square, then $\alpha, \theta > \frac{\pi}{4}$, which implies $ \pi-\alpha -\theta<\frac{\pi}{2}$. This completes the proof.
\end{proof}

\begin{Prop}
When the quadrilateral is $\dfrac{\pi}{12}$-near square, then $y(a_3) > \max(y(b_1), y(b_3))$, and $\min(y(a_6), y(a_8)) > y(b_6)$.
\label{Prop:3.4.2}
\end{Prop}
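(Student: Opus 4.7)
The plan is to extend the slope-chasing analysis of Proposition~\ref{Prop:Opposite Acute} to the remaining upper/lower vertices, $a_3$ (near the start of the unfolding) and $b_6$ (near the end). The key observation is that each of $a_3, b_6$ is joined by a single quadrilateral side to a vertex of the diagonal chain $\{b_1, b_3, a_6, a_8, b_1', b_3'\}$ already analyzed in Proposition~\ref{Prop:Opposite Acute}, so its $y$-coordinate is obtained by appending one further slope segment to that chain.

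To prove $y(a_3) > \max(y(b_1), y(b_3))$, I would identify in the unfolding the polygon containing $a_3$ together with the side $b_3 a_3$. The angular direction of $b_3 a_3$ equals the angular direction of the adjacent diagonal segment $b_3 a_6$ (with slope $\pi - \alpha - \theta$ from Proposition~\ref{Prop:Opposite Acute}) rotated by the interior half-angle of the quadrilateral at $b_3$ cut off by the diagonal; this half-angle is one of the four parameters of Figure~\ref{4 Dimension Quad}. Plugging in gives an explicit linear combination of $\alpha, \theta$, and a parameter angle. Under the $\pi/12$-near square hypothesis---supplemented by Lemma~\ref{Lem:3 Times Sigma} to control parameter angles on the other diagonal---combined with $\alpha, \theta$ acute, this combination lies strictly in $(0,\pi)$, giving $y(a_3) > y(b_3)$. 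The analogous argument using the side $b_1 a_3$ yields $y(a_3) > y(b_1)$. The companion inequality $\min(y(a_6), y(a_8)) > y(b_6)$ is proved by the same recipe at the opposite end of the unfolding, with $b_6$ joined to $a_6$ and $a_8$ by single quadrilateral sides and the slopes of those sides computed from the adjacent chain segments $a_6 a_8$ and $a_8 b_1'$ in exactly the same manner.

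The main obstacle is the bookkeeping of \emph{which} parameter angle of the quadrilateral appears at each of the four vertices $b_3, b_1, a_8, a_6$ along the respective adjacent sides; a sign error would flip an angular slope and break the inequality. Once the identifications are correctly made, positivity of the angular slopes under the near-square and Region $A$ conditions is a routine trigonometric check of the same flavor as the one at the end of the proof of Proposition~\ref{Prop:Opposite Acute}.
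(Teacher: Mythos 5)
Your proposal is essentially the paper's own argument: obtain the slope of each side joining $a_3$ (resp.\ $b_6$) to the diagonal chain by adding one parameter half-angle to a chain slope already computed in Proposition~\ref{Prop:Opposite Acute}, then use near-squareness to place the resulting direction in the upper half-plane. The only cosmetic difference is that the paper anchors to the segment $b_1b_3$ (slope $\alpha-\theta$, confined to $(-\pi/6,\pi/6)$) rather than to $b_3a_6$, which makes the final interval check slightly more direct.
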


\begin{proof}
We know from Proposition ~\ref{Prop:Opposite Acute} that the slope of $b_1 b_3$ is $\alpha-\theta$, which is between $\dfrac{\pi}{6}$ and $-\dfrac{\pi}{6}$ when the quadrilateral is $\dfrac{\pi}{12}$-near square. As $\angle a_3 b_1 b_3$ is between $\dfrac{\pi}{6}$ and $\dfrac{\pi}{3}$, we see that the slope of $b_1 a_3$ is between $\dfrac{\pi}{2}$ and $0$. So it has positive slope, and $y(a_3)>y(b_1)$. By basically the same argument, we can show that $y(a_3)>y(b_3), y(a_6)>y(b_6), y(a_8)>y(b_6)$. 
\end{proof}
\pagebreak
\begin{figure}[h]
\centering
\vspace{-35pt}
\subfloat[]{\includegraphics[width=7cm]{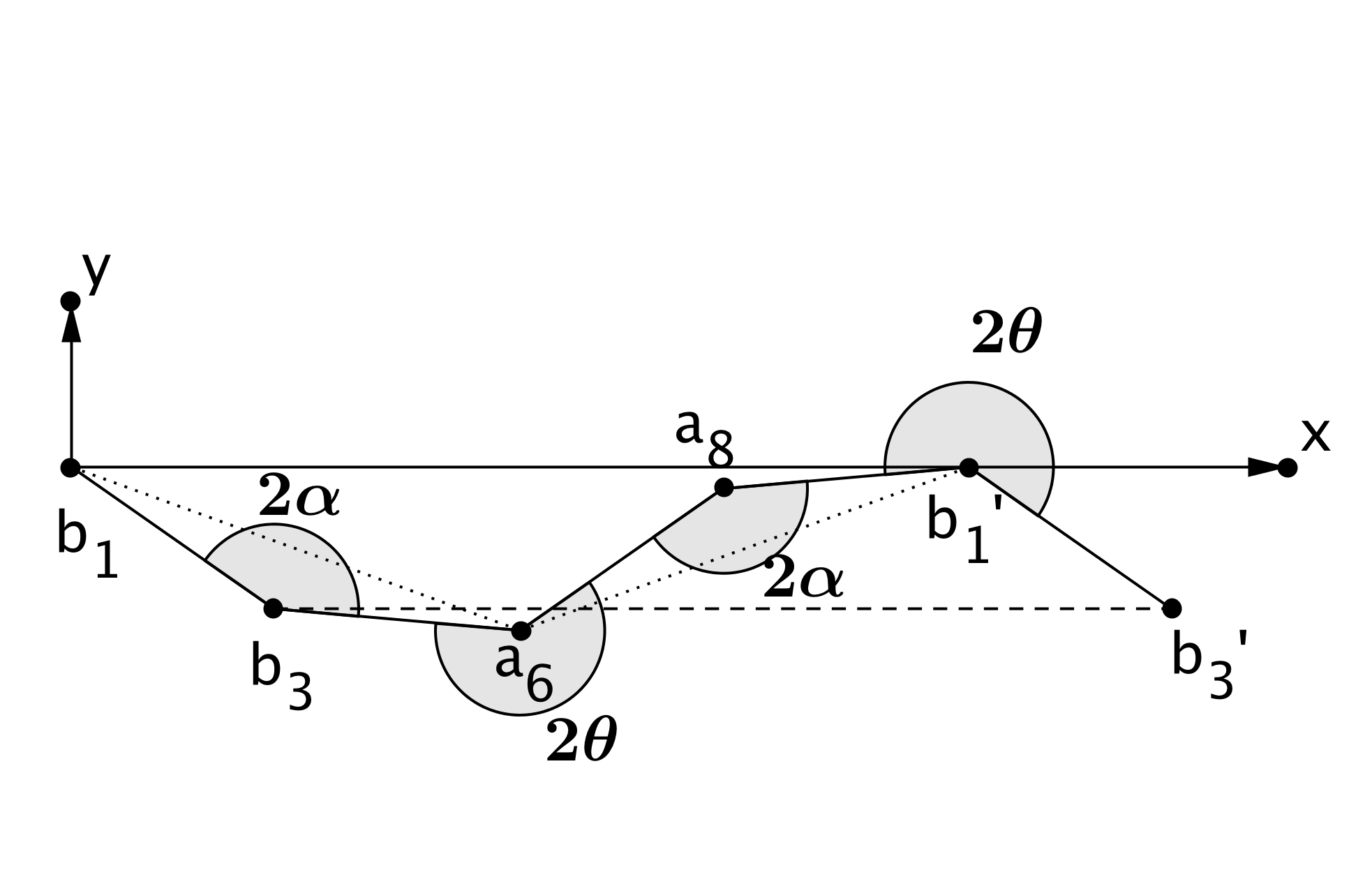}\label{fig:Pic1}}
\subfloat[]{\includegraphics[width=7cm]{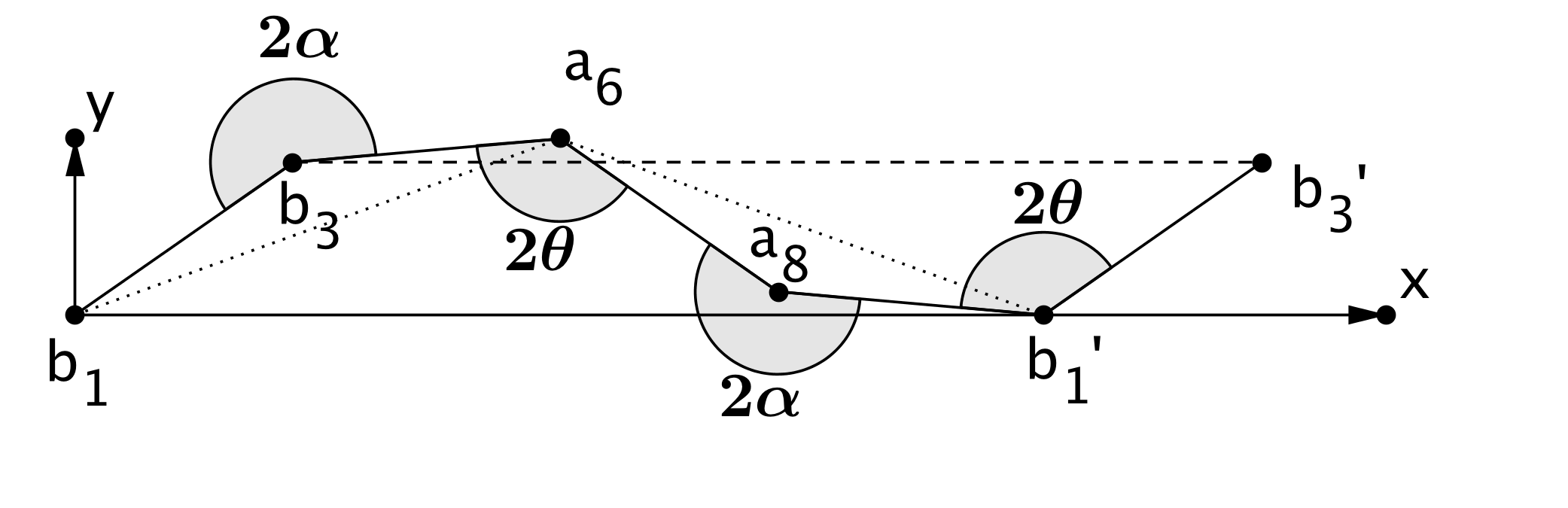}\label{fig:Pic2}}\\
\subfloat[]{\includegraphics[width=7cm]{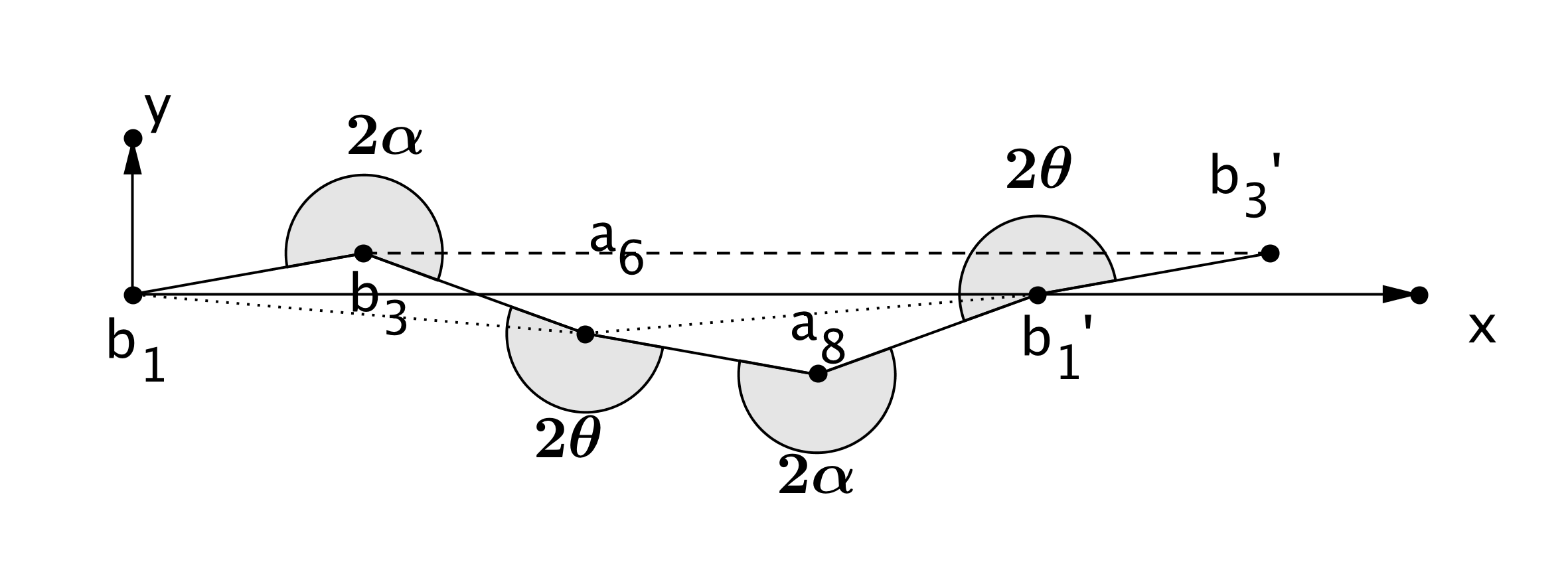}\label{fig:Pic3}}
\subfloat[]{\includegraphics[width=7cm]{Picturea2}\label{fig:Pic4}}
\caption{(a)$\alpha<\frac{\pi}{2}$, $\theta>\frac{\pi}{2}$, (b) $\alpha>\frac{\pi}{2}$, $\theta<\frac{\pi}{2}$ (c) $\alpha>\frac{\pi}{2}$, $\theta>\frac{\pi}{2}$ (d) $\alpha<\frac{\pi}{2}$, $\theta<\frac{\pi}{2}$}
\end{figure}
\begin{Lemma}
In an $\epsilon$-near square, if one edge has length 1, then the length of the adjacent edge is between $\tan\left(\dfrac{\pi}{4}-\epsilon\right) \cos \left(2 \epsilon\right)$ and $\dfrac{\tan\left(\frac{\pi}{4}+\epsilon\right)}{\cos (2 \epsilon)}$.
\label{Lem:Adjacent Edge Bound}
\end{Lemma}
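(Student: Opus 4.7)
The plan is to apply the Law of Sines on the two triangles formed by the diagonal used to parameterize the quadrilateral. Write the quadrilateral as $ABCD$ with diagonal $BD$, and label the four parameters $a_1 = \angle ABD$, $a_2 = \angle ADB$, $a_3 = \angle CDB$, $a_4 = \angle CBD$, each satisfying $|a_i - \pi/4| < \epsilon$. Without loss of generality take the unit-length edge to be $AB$; there are then two adjacent edges to bound, namely $AD$ (lying in the same triangle as $AB$) and $BC$ (lying in the other triangle).

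The easy case is $AD$: the Law of Sines in $\triangle ABD$ directly gives $AD/AB = \sin a_1 / \sin a_2$. I would then use the identity $\sin(\pi/4+\epsilon) = \cos(\pi/4-\epsilon)$, which yields $\sin(\pi/4+\epsilon)/\sin(\pi/4-\epsilon) = \tan(\pi/4+\epsilon)$, so this ratio lies in $[\tan(\pi/4-\epsilon), \tan(\pi/4+\epsilon)]$. This interval is already contained in the claimed one since $\cos(2\epsilon) \le 1$.

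The real content is the bound on $BC$. Applying the Law of Sines first in $\triangle ABD$ to solve for the diagonal $BD$, and then in $\triangle BCD$ to solve for $BC$, I would obtain
\[
\frac{BC}{AB} \;=\; \frac{\sin(a_1+a_2)\,\sin a_3}{\sin a_2\,\sin(a_3+a_4)}.
\]
Each diagonal-endpoint sum $a_i+a_j$ lies in $[\pi/2-2\epsilon,\pi/2+2\epsilon]$, so $\sin(a_i+a_j) \in [\cos(2\epsilon),1]$; and each individual $\sin a_i$ lies in $[\sin(\pi/4-\epsilon),\sin(\pi/4+\epsilon)]$. Substituting the extremal values gives the upper bound $\sin(\pi/4+\epsilon)/[\sin(\pi/4-\epsilon)\cos(2\epsilon)] = \tan(\pi/4+\epsilon)/\cos(2\epsilon)$ and, symmetrically, the lower bound $\tan(\pi/4-\epsilon)\cos(2\epsilon)$.

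The one subtlety I expect to be the main obstacle is that $a_2$ appears in both the numerator and denominator (and similarly $a_3$ appears in both of its factors), so the separate extremization of the four sine terms requires justification. I would argue one variable at a time: for each fixed $a_2 \in [\pi/4-\epsilon, \pi/4+\epsilon]$ the choice $a_1 = \pi/2 - a_2$ is admissible and makes $\sin(a_1+a_2) = 1$, reducing the first factor to $1/\sin a_2$, which is then maximized at $a_2 = \pi/4-\epsilon$. A parallel argument for $a_3,a_4$, using the monotonicity computation $\tfrac{d}{dx}\tfrac{\sin x}{\sin(x+c)} = \tfrac{\sin c}{\sin^2(x+c)} > 0$, shows that $\sin a_3 / \sin(a_3+a_4)$ is maximized at $a_3 = a_4 = \pi/4 + \epsilon$ with value $\sin(\pi/4+\epsilon)/\cos(2\epsilon)$. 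These extremal choices are jointly realizable, so the resulting bounds are sharp and the lemma follows. The minimization direction is handled identically with the inequalities reversed.
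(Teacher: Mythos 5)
Your proof is correct and takes essentially the same route as the paper: the paper likewise applies the Sine Rule in the two triangles cut off by the parameterizing diagonal, arrives at a ratio of the form $\dfrac{\sin \alpha_4}{\sin(\alpha_2+\alpha_4)}\cdot\dfrac{\sin(\alpha_1+\alpha_3)}{\sin\alpha_3}$, and leaves to the reader exactly the estimates you carry out (individual sines in $[\sin(\frac{\pi}{4}-\epsilon),\sin(\frac{\pi}{4}+\epsilon)]$, sines of the pair-sums in $[\cos(2\epsilon),1]$). The only remark worth making is that your ``subtlety'' about joint extremization is not actually needed for the lemma as stated, since bounding each positive factor separately already yields valid (if possibly non-sharp) two-sided bounds; your extra argument only establishes sharpness and does no harm.
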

\begin{proof}
WLOG, for Figure ~\ref{fig:4alpha4beta}, let $|AC|=1$. Then by sine rule:
\begin{equation*}
\dfrac{BC}{AC}=\dfrac{BC}{1}=\dfrac{\sin \alpha_1}{\sin \alpha_2},
\hspace{0.5cm}
\dfrac{AD}{AC}=\dfrac{AD}{1}=\dfrac{\sin \beta_1}{\sin \beta_2}=\dfrac{\sin\alpha_4}{\sin(\alpha_2+\alpha_4)}\dfrac{\sin(\alpha_1+\alpha_3)}{\sin\alpha_3}
\end{equation*}
We will leave the rest of the estimation to the reader.
\end{proof}

\begin{Prop}
When the quadrilateral is $\dfrac{\pi}{30}$-near square, $\alpha$ and $\theta$ are acute, and the other two angles are obtuse, then $y(a_3)>y(b_6)$.
\label{Prop:3.4.4}
\end{Prop}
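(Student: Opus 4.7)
The plan is to extend the slope-tracking analysis already set up in Proposition~\ref{Prop:Opposite Acute} and Proposition~\ref{Prop:3.4.2}. From Proposition~\ref{Prop:Opposite Acute}, every segment in the diagonal chain $b_1 \to b_3 \to a_6 \to a_8 \to b_1' \to b_3'$ is a (reflected) copy of the same diagonal of the original quadrilateral, so they share a common length $d$, and their angular slopes $\alpha-\theta,\; \pi-\alpha-\theta,\; \theta-\alpha,\; \alpha+\theta-\pi,\; \alpha-\theta$ are already given explicitly. Placing $b_1$ at the origin, this pins down $y(b_3),\,y(a_6),\,y(a_8),\,y(b_1')$ exactly as sums of $d\sin(\cdot)$ terms in $\alpha,\theta$.

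To locate $a_3$ and $b_6$, observe that $a_3$ lies on a side of the quadrilateral meeting $b_1$ (or equivalently $b_3$), and $b_6$ lies on a side meeting $a_6$ (or $a_8$). In each case we may write the position as the appropriate chain vertex plus a side vector of the quadrilateral, whose angular slope is the incoming chain slope rotated by the interior angle of the quadrilateral at that vertex, and whose length is a side of the quadrilateral. By Lemma~\ref{Lem:Adjacent Edge Bound} applied with $\varepsilon=\pi/30$, the ratio of this side length to $d$ lies in an explicit small interval around its square-value $1/\sqrt{2}$.

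Consequently both $y(a_3)$ and $y(b_6)$ can be written as trigonometric expressions in $\alpha,\theta$ with coefficients that are bounded products of a side length and a sine. The difference $y(a_3)-y(b_6)$ becomes a continuous function on the compact region
\[
\Bigl\{(\alpha,\theta,\text{side ratios}) : |\alpha-\tfrac{\pi}{4}|<\tfrac{\pi}{30},\; |\theta-\tfrac{\pi}{4}|<\tfrac{\pi}{30},\; \alpha,\theta\text{ acute},\text{ side ratios in the Lemma \ref{Lem:Adjacent Edge Bound} range}\Bigr\}.
\]
At the square itself ($\alpha=\theta=\pi/4$, all sides equal) the inequality $y(a_3)>y(b_6)$ is strict and immediate from the symmetry of the unfolding, so the job reduces to propagating that strict gap throughout the region.

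The main obstacle is the propagation step: one must verify a trigonometric inequality in $(\alpha,\theta)$ and the two relevant side-length ratios over an explicit small box. As with Lemma~\ref{Lem:3 Times Sigma}, this is routine but too fiddly to carry out by hand and is best dispatched by a Mathematica estimate on the closed region. The appearance of the somewhat peculiar constant $\tfrac{\pi}{30}$, where earlier propositions only needed $\tfrac{\pi}{12}$, strongly suggests that this inequality is what determines the constant: the estimate is tight and the constant is chosen to be just small enough that the gap $y(a_3)-y(b_6)$ stays positive across the region $A$.
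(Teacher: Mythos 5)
Your proposal reduces the claim to ``write $y(a_3)$ and $y(b_6)$ explicitly and verify a trigonometric inequality over a compact parameter box by machine,'' but the verification is exactly the content of the proposition and you never perform it; as written this is a plan, not a proof. The paper takes a genuinely different and fully executed route: it observes that $a_3a_3'$ is horizontal, so $y(a_3)>y(b_6)$ is equivalent to $\angle a_3 b_6 a_3'<\pi$, then decomposes $\angle a_6 b_6 a_3$ into three angles $\tau_1,\tau_2,\tau_3$ and bounds each one separately ($\tau_1<\pi/15$ from the isosceles triangle $\triangle a_5a_6b_6$; $\tau_2$ via a side-length comparison $a_5b_6>a_4a_5$; $\tau_3$ via the law of cosines together with Lemma~\ref{Lem:Adjacent Edge Bound}), finally assembling $\angle a_3b_6a_3'<2\bigl(\tfrac{3\pi}{5}-\beta+p\bigr)+\beta<\pi$. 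That angle-chasing argument is what actually pins down the constant $\pi/30$; your proposal correctly guesses that this proposition is the bottleneck for the constant but gives no mechanism for extracting it.

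There are also concrete errors in your setup. First, $\alpha$ and $\theta$ in Figure~\ref{fig:Orbit A} are full interior angles of the quadrilateral, so for an $\varepsilon$-near square they satisfy $|\alpha-\tfrac{\pi}{2}|<2\varepsilon$, not $|\alpha-\tfrac{\pi}{4}|<\varepsilon$; at the square itself $\alpha=\theta=\tfrac{\pi}{2}$ and the diagonal chain $b_1b_3a_6a_8b_1'$ becomes horizontal, which is a degenerate boundary case rather than an ``immediate'' strict instance of the inequality. Second, your parameter region $(\alpha,\theta,\text{side ratios})$ does not capture all the degrees of freedom: the directions of the edges $b_1a_3$ and $a_6b_6$ relative to the diagonal chain depend on the other two angles $\beta,\gamma$ and on how the diagonal splits them (the four parameters of Section~\ref{Section:Quadrilateral Space}), so the function $y(a_3)-y(b_6)$ lives on a larger region than the one you describe, and the continuity-plus-one-point-check framing does not by itself propagate the strict inequality anywhere. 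To make your approach work you would need to parametrize by the full four-dimensional family and actually carry out the boxed estimate.
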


\begin{figure}[h]
\centering
\includegraphics[width=11cm]{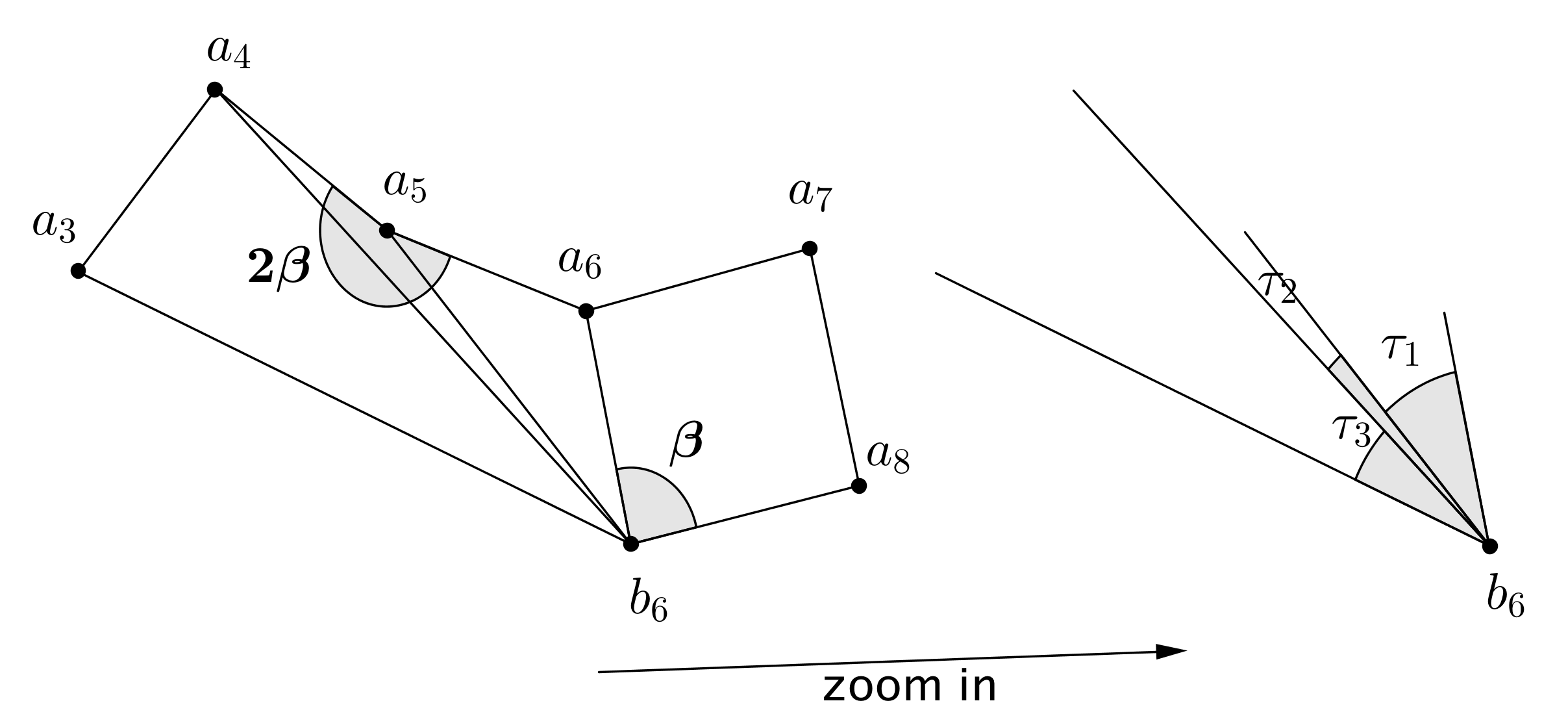}
\caption{ }
\label{fig:Extract}
\end{figure}

\begin{proof}

Clearly the line $a_3 a_3'$ is parallel to the $x$-axis. Then it clearly follows that $y(a_3)>y(b_6)$ if and only if $\angle a_3 b_6 a_3' < \pi$ (See figure ~\ref{fig:Orbit A})

Let us first look at $\angle a_6 b_6 a_3$ by breaking it down into three angles, $\tau_1, \tau_2, \tau_3,$ as shown in Figure ~\ref{fig:Extract}. Suppose our quadrilateral is $\frac{\pi}{30}$-near square, then (See figure ~\ref{fig:Orbit A}) $\theta > \frac{\pi}{2}-2\frac{\pi}{30}=\frac{13\pi}{30}$, so $\angle a_5 a_6 b_6 =2\theta> \frac{13\pi}{15}$. As $\bigtriangleup a_5 a_6 b_6$ is an isosceles triangle, we see that $\tau_1<\frac{\pi}{15}$. (also, since $\theta<\frac{\pi}{2}$, so $\tau_1 >0$)

Let $\angle a_6 b_6 a_8 = \beta$. Then $\angle a_4 a_5 a_6 = 2\beta$ as shown, and $\angle a_4 a_5 b_6 = 2\beta - \tau_1 > 2\beta - \frac{\pi}{15}$. If $\angle a_4 a_5 b_6 \geq \pi$, then $\tau_2 \leq 0$ (i.e. in the figure above, $\tau_1$ and $\tau_3$ overlap). So $\tau_1+\tau_2 \leq \tau_1 <\frac{\pi}{15}$. 

Note that $a_5 b_6 = 2 \cos(\angle a_6b_6a_5)\cdot a_6b_6 >2 \cos \frac{\pi}{15}*a_6 b_6 > a_6 b_6 = a_4 a_5$. So, if $\angle a_4 a_5 b_6<\pi$, then $a_5 b_6 > a_4 a_5$ implies  $\tau_2 < \angle a_5a_4b_6,$ hence $\tau_2<\dfrac{\angle a_5a_4b_6+\tau_2}{2}=\dfrac{\pi-\angle a_4 a_5 b_6}{2} < \dfrac{\pi-(2\beta - \frac{\pi}{15})}{2}=\frac{8\pi}{15} -\beta$. 

WLOG, let the length $a_4 a_5 = a_5 a_6 = a_6 b_6 = 1$, then we know that $a_5 b_6 > 2 \cos \frac{\pi}{15}$. Note that $\beta \in (\frac{\pi}{2},\frac{17}{30} \pi), \tau_1 \in (0,\frac{\pi}{15})$, so $\angle a_4 a_5 b_6=2\beta - \tau_1 \in (\frac{14}{15}\pi,\frac{17}{15}\pi)$ (and hence $\cos (\angle a_4 a_5 b_6) <0$). 

Then we can see that the length $a_4 b_6 > \sqrt{1^2 + (2 \cos \frac{\pi}{15})^2 -2 \cdot 1 \cdot 2 \cos \frac{\pi}{15} \cdot \cos(\frac{17}{15}\pi)}\approx 2.89852$. We also know by Lemma~\ref{Lem:Adjacent Edge Bound} that the length $a_3 a_4 < \dfrac{\tan(\frac{\pi}{4}+\frac{\pi}{30})}{\cos \frac{\pi}{15}}\approx 1.26249$. Note that $\sin\tau_3 \leq \dfrac{a_3 a_4}{a_4 b_6} <\frac{1.26249}{2.89852} \approx 0.43556$. So $\tau_3 < p=0.45066$.

So now if $2\beta - \frac{\pi}{15} \geq \pi$, then $\angle a_4 a_5 b_6 \geq \pi$, and so  $\tau_1+\tau_2+\tau_3 <\frac{\pi}{15}+p$. If $2\beta - \frac{\pi}{15}<\pi$, then $\angle a_6 b_6 a_3 =\tau_1+\tau_2+\tau_3 < (\frac{\pi}{15})+(\frac{8\pi}{15} -\beta )+p=\frac{3\pi}{5}-\beta +p$. But the latter one is clearly looser in this case, so we use the latter inequality. Similarly, one can also show that $\angle a'_3 b_6a_8< \frac{3\pi}{5}-\beta +p$. So $\angle a_3 b_6 a_3' < 2(\frac{3\pi}{5}-\beta+p)+\beta <2p+\frac{6\pi}{5}-\beta<2p+\frac{6\pi}{5}-\frac{\pi}{2} \approx 3.10043 < \pi$. Here the second to last inequality follows because $\beta$ is supposed to be obtuse. This proves the proposition.

\end{proof}

For a summary, a quadrilateral which is $\frac{\pi}{30}$-near square has orbit A if it has two opposite acute angles and two opposite obtuse angles (i.e. it is represented in the region $A$ in Figure ~\ref{fig:choplefthalf}).

%%%%   	%%%%%
      %	%      
%%%%	%%%%%
      %      		%
%%%%  %   %%%%%

\subsection{The $X$ family orbits}
\label{sec:X}

The $X$ family orbits is an infinite family of orbits, starting from $X_2$. Let us first study $X_2$, and then study the general pattern of the $X$ family orbits.

\begin{figure}[h]
\includegraphics[width=\textwidth]{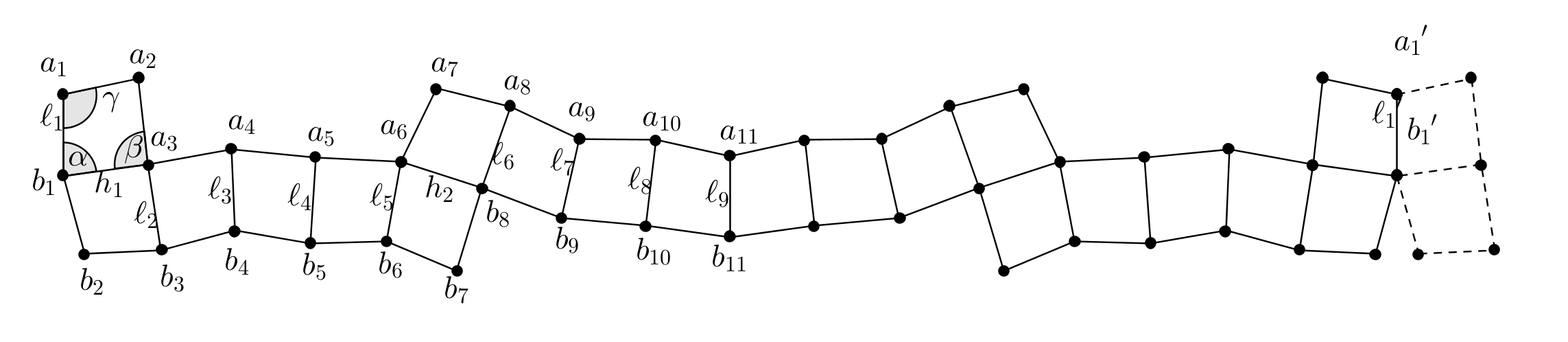}
\caption{Unfolding of $X_2$ orbit}
\label{fig:X2}
\end{figure}

Up to a relabeling of the edges, $X_2$ has the orbit type 01313013131310313103. This is clearly stable by Lemma ~\ref{Lem:Stable Odd Even}. An unfolding would look like Figure ~\ref{fig:X2}. The first obvious feature of $X_2$ is that the unfolding is symmetric about the line $\ell_9$. Note that the motion from $\ell_1$ to $\ell_1'$ is a translation. So, if we build a coordinate frame using $b_1 b_1'$ as the $x$-axis and let $\ell_1$ as the $y$-axis, then $\ell_1,\ell_9,$ and $\ell_1'$ are vertical.

Now to find the condition for this orbit to exist, we must find the condition when all top vertices are above all bottom vertices. i.e. in Figure ~\ref{fig:X2}, we want to show:
\begin{equation*}
\displaystyle \max_j y(b_j) <\min_i y(a_i)
\end{equation*}
Since everything is symmetric about $\ell_9$, we only need to consider the left half part of the unfolding. In the following proof, we shall use $v | \ell | w$ to denote the fact that the objects $v$ and $w$ are symmetric about the line $\ell$. Furthermore, we shall use $s(\ell)$ to denote the `angular slope' of the line $\ell$ (`angular slope' $= \tan^{-1} (m)$ with values in $\left(-\frac{\pi}{2},\frac{\pi}{2}\right)$, where $m=$ gradient of $\ell$). In addition, we use the notation $\min y(a_1,a_2,a_3)$ to denote the minimum of $\{y(a_1),y(a_2),y(a_3)\}$, and similar notation for the maximum.

\begin{Prop}
Suppose the quadrilateral is $\dfrac{\pi}{16}$-near square, then in Figure ~\ref{fig:X2}, TFAE:
\begin{enumerate}
\item $\min y(a_3, a_6)>\max(b_1, b_8)$ 
\item $s(\ell_3)<0, s(\ell_4)>0$ and $s(h_1)>0$. 
\item $\alpha <\frac{\pi}{2}, \beta >\frac{\pi}{2}, 3\beta+\alpha < 2\pi$
\end{enumerate}
\label{Prop:3.5.1}
\end{Prop}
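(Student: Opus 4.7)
The plan is a two-step reduction: (1)$\Leftrightarrow$(2) converts the geometric ``above-below'' inequality into slope inequalities on three distinguished segments, and (2)$\Leftrightarrow$(3) evaluates those slopes as affine functions of $\alpha$ and $\beta$. Throughout, the $\frac{\pi}{16}$-near-square hypothesis is used only to keep every angular slope strictly inside $\left(-\frac{\pi}{2},\frac{\pi}{2}\right)$, so that the sign of $s(\ell)$ really agrees with the sign of the $y$-increment along $\ell$ and none of the slope comparisons degenerate.

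For (1)$\Leftrightarrow$(2), I would first exploit the symmetry about $\ell_9$ to fold the unfolding in half. Because the translation carrying $b_1$ to $b_1'$ is horizontal and $\ell_9$ is vertical, $y(a_3)=y(a_6)$ and $y(b_1)=y(b_8)=0$, so (1) reduces to $y(a_3)>0$ together with a single cross-comparison linking $b_8$ to $a_3$ across $\ell_9$. Using the isosceles-triangle observation from the proof of Proposition~\ref{Prop:Opposite Acute} (consecutive segments $\ell_{i-1}, \ell_i$ form the equal legs of an isosceles triangle whose apex angle is twice the reflection angle), the vertical displacement accumulated as one walks from $\ell_1$ outward is controlled by the angular slopes $s(\ell_i)$. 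A short case check, in the spirit of the Remark following Proposition~\ref{OrbitAProp}, shows that the only critical comparisons are those governed by $\ell_3$, $\ell_4$, and the cross segment $h_1$; the rest follow for free from convexity together with near-squareness.

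For (2)$\Leftrightarrow$(3), I would place $\ell_1$ as the positive $y$-axis and propagate directions along the orbit word of $X_2$. Each reflection at a vertex with interior angle $\gamma$ rotates the current diagonal direction by $\pm(\pi-2\gamma)$, the sign being determined by the chirality of the reflection. Reading these signs off the word $0(13)^2013131310(31)^203$ and cumulating, the three slopes $s(\ell_3)$, $s(\ell_4)$, $s(h_1)$ come out as explicit linear combinations of $\alpha$, $\beta$, and multiples of $\pi/2$. Matching signs then shows, one line at a time, that $s(\ell_3)<0$ is equivalent to $\alpha<\pi/2$, that $s(\ell_4)>0$ is equivalent to $3\beta+\alpha<2\pi$, and that $s(h_1)>0$ is equivalent to $\beta>\pi/2$.

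The main obstacle will be the bookkeeping in the first equivalence: one has to verify that the three conditions in (2) really capture all relevant top-bottom comparisons, i.e.\ that once $s(\ell_3)$, $s(\ell_4)$, and $s(h_1)$ have the required signs, every remaining comparison among the $y(a_i)$ and $y(b_j)$ in the left half of the unfolding is automatically implied. This is where the quantitative $\frac{\pi}{16}$-near-square bound enters, via convexity arguments of the type already used to eliminate redundant vertices in Section~\ref{sec:A}.
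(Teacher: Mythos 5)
Your overall strategy matches the paper's: reduce (1) to sign conditions on angular slopes via reflection symmetries, then evaluate those slopes as affine functions of $\alpha$ and $\beta$ modulo $\pi$ and use near-squareness to select the correct branch. However, the key step of your (1)$\Leftrightarrow$(2) reduction rests on a false symmetry claim. The reflection about the vertical line $\ell_9$ exchanges the left and right halves of the unfolding; the four vertices $a_3$, $a_6$, $b_1$, $b_8$ all lie in the left half, so that reflection does not pair them up, and in general $y(a_3)\neq y(a_6)$ and $y(b_1)\neq y(b_8)$. (Indeed, if (1) collapsed to a single comparison it could hardly be equivalent to the three independent inequalities in (3).) The symmetries that actually do the work are reflections about the \emph{non-vertical} lines $\ell_3$ and $\ell_4$ inside the left half: $b_1$ and $a_6$ are mirror images across $\ell_3$, and $a_3$ and $b_8$ are mirror images across $\ell_4$, so $s(\ell_3)<0$ encodes exactly $y(a_6)>y(b_1)$ and $s(\ell_4)>0$ encodes exactly $y(a_3)>y(b_8)$, while the remaining comparisons $y(a_3)>y(b_1)$ and $y(a_6)>y(b_8)$ are read off from the signs of $s(h_1)$ and $s(h_2)$. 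Your ``fold in half and make one cross-comparison'' plan does not account for all four comparisons hidden in (1).

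A secondary symptom of the same problem is that your dictionary in (2)$\Leftrightarrow$(3) is permuted. The paper computes $s(h_1)=\frac{\pi}{2}-\alpha$, so $s(h_1)>0\iff\alpha<\frac{\pi}{2}$; $s(\ell_3)\equiv\frac{\pi}{2}+2\beta\pmod{\pi}$, so (using $\frac{3\pi}{8}<\beta<\frac{5\pi}{8}$) $s(\ell_3)<0\iff\beta>\frac{\pi}{2}$; and $s(\ell_4)\equiv\frac{\pi}{2}+3\beta+\alpha\pmod{\pi}$ gives $s(\ell_4)>0\iff 3\beta+\alpha<2\pi$. You attach $\alpha<\frac{\pi}{2}$ to $\ell_3$ and $\beta>\frac{\pi}{2}$ to $h_1$, which suggests the slope propagation was not actually carried out. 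Finally, the near-square hypothesis does more than keep slopes inside $\left(-\frac{\pi}{2},\frac{\pi}{2}\right)$: it fixes which representative of the slope modulo $\pi$ is the true angular slope (this is how the sign of $s(\ell_3)$ becomes equivalent to $\beta\gtrless\frac{\pi}{2}$), and in the direction (3)$\Rightarrow$(1) it controls the slopes of the remaining lines $\ell_2$, $h_2$, $\ell_5,\dots,\ell_8$ so that all other vertex comparisons come for free.
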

\begin{proof}
$(1) \Rightarrow (2)$: Since $y(a_6)>y(b_1)$ and $b_1 | \ell_3 | a_6$, we see that $s(\ell_3)<0$. Since $y(a_3)>y(b_8)$ and $a_3 | \ell_4 | b_8$, we see that $s(\ell_4)>0$. Since $y(a_3)>y(b_1)$, we see that $s(h_1)>0$.

$(2) \Rightarrow (3)$: One can verify that $s(h_1)=\frac{\pi}{2}-\alpha, s(\ell_3)\equiv s(h_1)+2\beta+\alpha = \frac{\pi}{2}+2\beta (\text{mod } \pi)$, and $s(\ell_4)\equiv s(\ell_3)+\alpha+\beta = \frac{\pi}{2} +3\beta+\alpha (\text{mod } \pi)$. Assume for contradiction that $\frac{3\pi}{8} < \beta \leq \frac{\pi}{2}$ (the first inequality is due to the near-squareness). Then $\pi+\frac{1}{4}\pi < 2\beta+\frac{\pi}{2} \leq \pi +\frac{\pi}{2}$. So, after taking modulo $\pi$, we get $s(\ell_3)>0$ (which contradicts (2)). Similarly, we can argue that $\alpha<\frac{\pi}{2}$ and $3\beta +\alpha <2\pi$.

$(3) \Rightarrow (2) \& (1)$: The near square condition also gives us $\alpha>\dfrac{3\pi}{8}, \beta<\dfrac{5\pi}{8}$. So, using the conditions in (3), we can get the inequalities in $(2)$, $\ell_2, h_2$ having negative slope and $\ell_5, \ell_6, \ell_7, \ell_8$ having positive slope. Now the slope of $h_1, h_2$ makes sure that $y(a_3)>y(b_1), y(a_6)>y(b_8)$. And as $b_1 | \ell_3 | a_6, a_3 | \ell_4 | b_8$, we have $y(a_6)>y(b_1)$, and $y(a_3)>y(b_8)$. This completes our proof.
\end{proof}

\begin{Remark}
If the condition for Prop ~\ref{Prop:3.5.1} is satisfied, and the angle $\gamma$ is obtuse, then \\$\displaystyle \max_j y(b_j)=\max y(b_1,b_4,b_5,b_8),\min_i y(a_i) =\min y(a_3,a_6,a_{11})$
\end{Remark}
\begin{proof}
If $\gamma$ is obtuse, then immediately we know $y(a_2)>y(a_1), y(a_{10})>y(a_{11}),$ so vertices $a_2, a_{10}$ are eliminated. 

Now as $a_3 | \ell_3 | a_5, a_4 | \ell_4 | a_6,$ by the slope of $\ell_3$ and $\ell_4$ we must have $y(a_5)>y(a_3), y(a_4)>y(a_6)$. So we don't need to consider vertices $a_4, a_5$. We also have $b_8 | \ell_7 | b_{10}, b_9 | \ell_8 | b_{11},$ so similarly we eliminate vertices $b_{10}, b_{11}$. We also know that $\angle b_8 b_9 b_{10} = 2\alpha <2\pi$, so $y(b_9)<max(y(b_8), y(b_{10}))$, and we eliminate $b_9$.

As $a_7 | \ell_6 | a_9, a_8 | \ell_7 | a_{10}, a_9 | \ell_8 | a_{11},$ we see that $y(a_7)>y(a_9)>y(a_{11}), y(a_8)>y(a_{10})>y(a_{11})$, so we eliminate $a_7,a_8,a_9$. Similarly, as $b_2 | \ell_2 | b_4, b_3 | \ell_3 | b_5, b_4 | \ell_4 | b_6, b_5 | \ell_5 | b_7,$ so $y(b_2)<y(b_4), y(b_3)<y(b_5), y(b_6)<y(b_4), y(b_7)<y(b_5)$. So we eliminate $b_2,b_3,b_6,b_7$.

Finally, one can use the $\frac{\pi}{12}$ near squareness to estimate and get the result $y(a_1)>y(a_3)$, and so we can eliminate $a_1$.
\end{proof}

\begin{Prop}
If the condition for Prop ~\ref{Prop:3.5.1} is satisfied, and the quadrilateral is $\dfrac{\pi}{56}$-near square and the angle $\gamma$ is obtuse, then $\displaystyle \max y(b_1,b_4,b_5,b_8)=\max y(b_1,b_8), \min y(a_3,a_6,a_{11})=\min y(a_3,a_6)$
\label{Prop:3.5.3}
\end{Prop}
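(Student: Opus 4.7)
The plan is to extend the reflection-and-slope bookkeeping of Proposition \ref{Prop:3.5.1} to eliminate the interior candidates $b_4$ and $b_5$ from the bottom-chain maximum and $a_{11}$ from the top-chain minimum. The key input is the symmetry of the unfolding in Figure \ref{fig:X2} about the vertical line $\ell_9$: it pairs the two middle bottom vertices so that $y(b_4)=y(b_5)$, and it identifies $a_{11}$ with a vertex on the left half whose $y$-coordinate is already controlled by the proof of Proposition \ref{Prop:3.5.1}. Thanks to this symmetry it suffices to bound one representative per mirrored pair.

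First, I would enumerate the angular slopes $s(\ell_i)$ and $s(h_i)$ of every edge and reflection line in the left half of the unfolding, reducing them modulo $\pi$ to explicit linear combinations of the four angles of the quadrilateral by the same mechanism used to derive $s(\ell_3)=\tfrac{\pi}{2}+2\beta$ and $s(\ell_4)=\tfrac{\pi}{2}+3\beta+\alpha$ in Proposition \ref{Prop:3.5.1}. Under the $\pi/56$-near square hypothesis each of the four angles lies within $\pi/56$ of $\pi/4$, so every such linear combination reduces to a narrow interval around $0$ or $\pm\pi/2$, and in particular its sign is determined.

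Next, I would write $y(b_4)-y(b_1)$ as the telescoping sum, over the bottom-chain edges from $b_1$ to $b_4$, of $(\text{edge length})\cdot\sin(\text{angular slope})$, and similarly express $y(b_8)-y(b_4)$. By Lemma \ref{Lem:Adjacent Edge Bound} the edge lengths all lie in the explicit interval $[\tan(\pi/4-\pi/56)\cos(\pi/28),\,\tan(\pi/4+\pi/56)/\cos(\pi/28)]$, while the signs and magnitudes of the slopes are pinned down by the previous step. Combining the two telescoping sums forces $y(b_4)\le\max(y(b_1),y(b_8))$, and the $\ell_9$-symmetry supplies the same bound for $y(b_5)$. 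An analogous top-chain calculation, or a direct use of the $\ell_9$-symmetry together with the near-squareness already used to eliminate $a_1$ at the end of the preceding remark, handles $y(a_{11})\ge\min(y(a_3),y(a_6))$.

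The main obstacle is the tightness of the arithmetic. The constant $\pi/56$ is evidently calibrated so that these slope-plus-length estimates only just close, so the $\sin$ and $\cos$ factors must be bounded with enough precision to confirm that the rising contributions along each chain are strictly dominated by the falling ones; as with Lemma \ref{Lem:Adjacent Edge Bound} I would expect to verify the decisive inequalities with a symbolic or numerical check. A secondary nuisance is bookkeeping: one must read off Figure \ref{fig:X2} which edge of which unfolded copy furnishes each link $b_i\to b_{i+1}$ on the bottom chain, and likewise on the top.
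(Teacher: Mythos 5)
Your plan leans on two symmetry claims that do not hold, and in doing so it assumes away the one step that actually carries the content of this proposition. First, $b_4$ and $b_5$ are not mirror images about $\ell_9$: both lie in the left half of the unfolding (the Remark pairs them as $b_4\,|\,\ell_4\,|\,b_6$ and $b_5\,|\,\ell_5\,|\,b_7$, not with each other), so the identity $y(b_4)=y(b_5)$ is unjustified. Second, and more seriously, $a_{11}$ is not identified by the $\ell_9$-symmetry with any vertex whose height is controlled by Proposition~\ref{Prop:3.5.1}; if it were, the conclusion $\min y(a_3,a_6,a_{11})=\min y(a_3,a_6)$ would be automatic and there would be no reason for the near-squareness constant to tighten from $\pi/16$ to $\pi/56$. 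In the paper's proof the comparison $y(a_{11})>y(a_6)$ is precisely the delicate step: normalizing $b_{10}b_{11}=1$, Lemma~\ref{Lem:Adjacent Edge Bound} gives $y(a_{11})-y(b_{11})\geq\tan\left(\tfrac{\pi}{4}-\tfrac{\pi}{56}\right)\cos\tfrac{\pi}{28}\approx 0.8880$, while an explicit cosine telescoping along the diagonal chain bounds $y(a_6)-y(b_{11})$ above by roughly $0.8845$. The margin is about $0.4\%$, and it is exactly this inequality that calibrates $\pi/56$. Your proposal relegates this to an ``analogous calculation or symmetry'' afterthought, which is a genuine gap.

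Your fallback mechanism (telescoping sums of edge length times sine of angular slope, with Lemma~\ref{Lem:Adjacent Edge Bound} controlling lengths) is in the right spirit for the $a_{11}$ versus $a_6$ comparison and could be made to work, though note that the signs of the relevant slopes are determined by the region conditions $\alpha<\tfrac{\pi}{2}$, $\beta>\tfrac{\pi}{2}$, $3\beta+\alpha<2\pi$, not by near-squareness alone. For the bottom chain the paper avoids telescoping entirely: having shown $a_7,\dots,a_{11}$ all lie above the line through $a_6$ perpendicular to $\ell_9$, the mirrored observation is that $b_2,\dots,b_6$ all lie below the line through $b_1$ perpendicular to $\ell_5$; since $\ell_5$ has positive slope that perpendicular has negative slope, giving $y(b_4)<y(b_1)$ and $y(b_5)<y(b_1)$ directly. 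You should either adopt that geometric argument or carry out your telescoping estimate for $b_4$ and $b_5$ separately, since the claimed equality of their heights is not available.
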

\begin{proof}
In this proof, we will show that $y(a_{11})>y(a_6), y(b_4)<y(b_1), y(b_5)<y(b_1)$.

Suppose the length $b_{10} b_{11} = 1$. Then by lemma ~\ref{Lem:Adjacent Edge Bound} we know:
\begin{equation*}
y(a_{11})\geq y(b_{11})+ \tan (\dfrac{\pi}{4}-\dfrac{\pi}{56}) \cos \dfrac{\pi}{28}\approx 0.8880356
\end{equation*}
We can also calculate that:
\begin{equation*}
y(a_6) = y(b_{11}) + \cos \alpha - \cos (\alpha+2\beta)+\cos (3\alpha+2\beta)-\cos(3\alpha+4\beta).
\end{equation*}
Note that $\frac{\pi}{2}-2\varepsilon<\alpha < \frac{\pi}{2}, \beta \geq \frac{\pi}{2}$. So, we have (by letting $\varepsilon=\frac{\pi}{56}$)
\begin{align*}
y(a_6)&\leq y(b_{11}) +  \cos \left(\frac{\pi}{2}-2\varepsilon\right) - \cos \left(\frac{\pi}{2}-2\varepsilon+2\cdot\frac{\pi}{2}\right)+\cos \left(3\left(\frac{\pi}{2}-2\varepsilon\right)+2\cdot\frac{\pi}{2}\right)
\\ &-\cos\left(3\left(\frac{\pi}{2}-2\varepsilon\right)+4\cdot\frac{\pi}{2}\right)\approx 0.884487
\end{align*}
A calculation shows that $y(a_{11})>y(a_6)$.

We have shown that $a_{11}$ is the lowest of $a_7, a_8, a_9$ and $a_{10}$. If we draw a line through $a_6$ perpendicular to $\ell_9$, then $a_7, a_8, a_9, a_{10}, a_{11}$ would all lie above this line. So for the same reason, if we draw a line through $b_1$ perpendicular to $\ell_5$, then $b_2, b_3, b_4, b_5, b_6$ would all lie below this line. As $\ell_5$ has positive slope, clearly this perpendicular line has negative slope. So we see that $y(b_4)<y(b_1)$ and $y(b_5)<y(b_1)$.
\end{proof}

By combining the Propositions and Remark, we conclude that a quadrilateral has orbit $X_2$ if the following conditions are satisfied: 
\begin{enumerate}
\item the quadrilateral is $\dfrac{\pi}{56}$-near square;
\item the quadrilateral has one acute angle $\alpha$, and two obtuse angle adjacent to $\alpha$;
\item one of the obtuse angle $\beta$ adjacent to $\alpha$ satisfies the inequalities $2\beta>\pi$ and $\alpha+3\beta<2\pi$.
\end{enumerate}
Note that the region $X_2$ in Figure ~\ref{fig:choplefthalf} does satisfy these properties, and hence have this periodic billiard path. (It is also worth mentioning that we do not need $\beta$ to be the smaller of the two angles adjacent to $\alpha$.)

\begin{figure}[h]
\includegraphics[width=\textwidth]{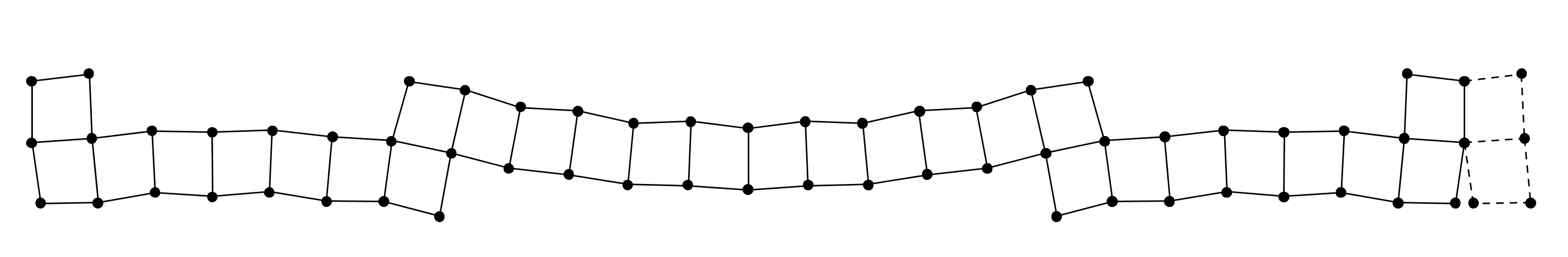}
\caption{Unfolding of $X_3$ orbit}
\label{fig:X3}
\end{figure}

\begin{figure}[h]
\includegraphics[width=\textwidth]{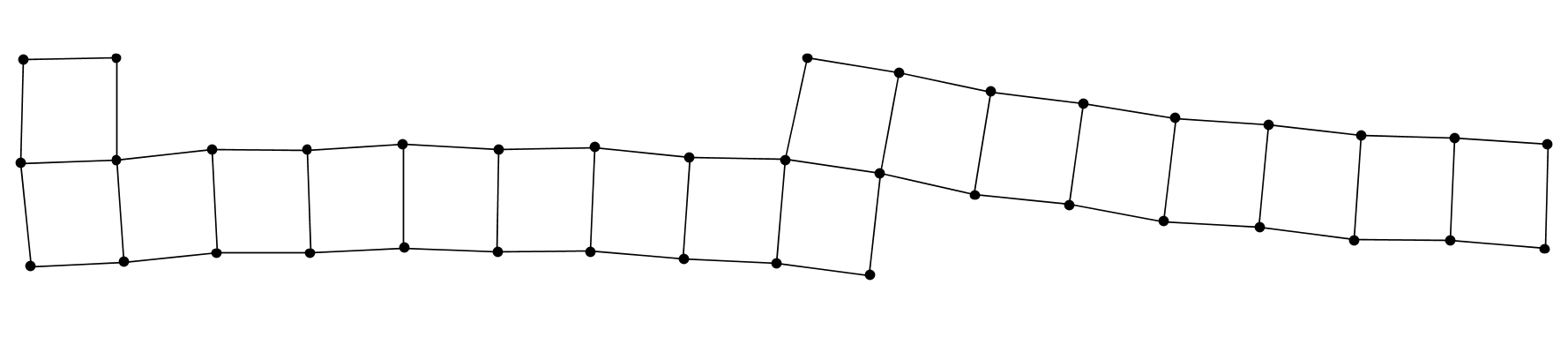}
\caption{Half unfolding of $X_4$ orbit}
\label{fig:X4half}
\end{figure}

Now we are ready to explore other orbits in the $X$ family. In general, the orbit $X_n$ will have orbit type $0(13)^n0(13)^{n-1}131(31)^{n-1} 0(31)^n03$, and some unfolding are in Figure ~\ref{fig:X3}, ~\ref{fig:X4half}. The conditions for these orbits to work is similar to $X_2$, and the proof is basically identical. For any integer n, a quadrilateral has orbit $X_n$ if the following conditions are satisfied:

\begin{enumerate}
\item the quadrilateral is $\varepsilon_n$-near square;

\item the quadrilateral has one acute angle $\alpha$, and two obtuse angle adjacent to $\alpha$;

\item one of the obtuse angle $\beta$ adjacent to $\alpha$ satisfies the inequalities $(n-2)\alpha+n\beta>(n-1)\pi$ and $(n-1)\alpha+(n+1)\beta<n\pi$.
\end{enumerate}

Here $\varepsilon_n$ is a decreasing sequence with limit $0$ as $n$ goes to $\infty$. $\varepsilon_n$ the real root nearest to $0$ of the following equation: Let $x_n=\dfrac{\pi}{2}-2\epsilon_n, y_n=\dfrac{n-1}{n}\pi-\dfrac{n-2}{n}x_n, y_n'=\frac{\pi}{2}$, then:

\begin{equation}
\tan\left(\dfrac{\pi}{4}-\varepsilon_n\right)\cos(2\varepsilon_n)=\sum_{i=1}^n \cos\left[(2i-1)x_n+(2i-2)y_n\right]-\sum_{i=1}^n \cos\left[(2i-1)x_n+(2i)y_n'\right]
\end{equation}

\begin{wrapfigure}{r}{6cm}
\vspace{-20pt}
  \begin{center}
    \includegraphics[width=6cm]{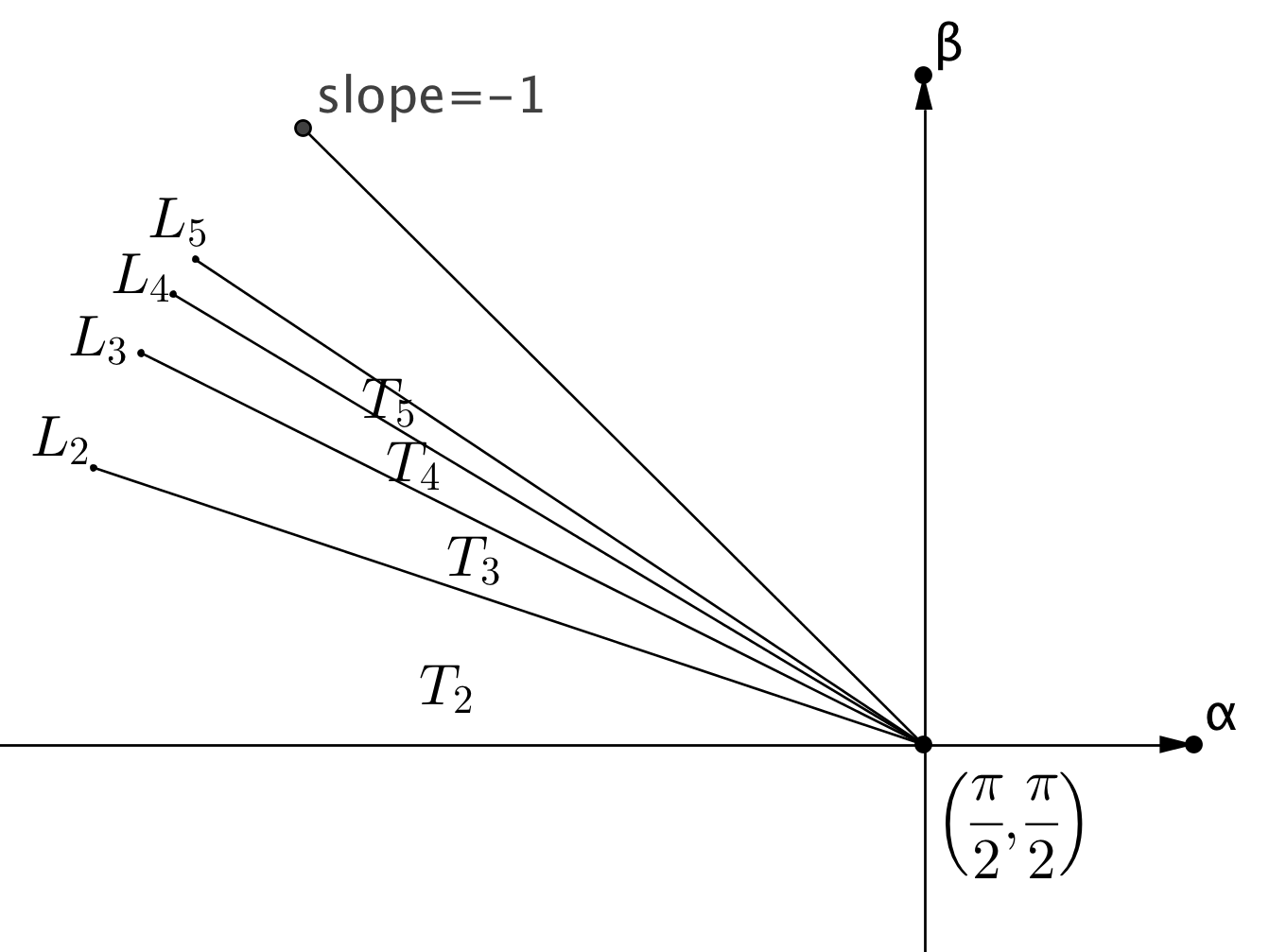}
  \end{center}
  \caption{Coordinate Space}
\label{fig:Coordinate Space 2}
\vspace{-10pt}
\end{wrapfigure}

Condition 2 and 3 implies that $X_n$ covers the region $T_n$ in the $\alpha$-$\beta$ plane of Figure ~\ref{fig:Coordinate Space 2} (as long as the quadrilateral is sufficiently near square). The gradient of $L_n$ is $-\frac{n-1}{n+1}$, which goes to $-1$ as $n$ goes to $\infty$. Note that gradient of $L_3$ is $-1/2$, and the orbit $A$ in Section 3.4 covers the region above the line $L_3$. 

In short, if the quadrilateral is described in region $T_3$ [$T_2$, respectively] and is $\varepsilon_3$-near square [$\varepsilon_2$], then it has the orbit $X_3$ [$X_2$].

One can calculate that $\varepsilon_3$ is slightly larger than $\frac{\pi}{107}$, and so the near-squareness condition for $X_3$ is more strict than that of $X_2$. The only thing left is the lines $L_2, L_3$, which will be solved by the Y family.

%%%%%	 %%%%%
	 %	 %
%%%%%	 %%%%%
	 %	 %	  %
%%%%%  %  %%%%%
\subsection{The $Y$ family orbits}
\label{sec:Y}
\begin{figure}[h]
\hspace{-30pt}
\centering
\subfloat[]{\includegraphics[width=5.5cm]{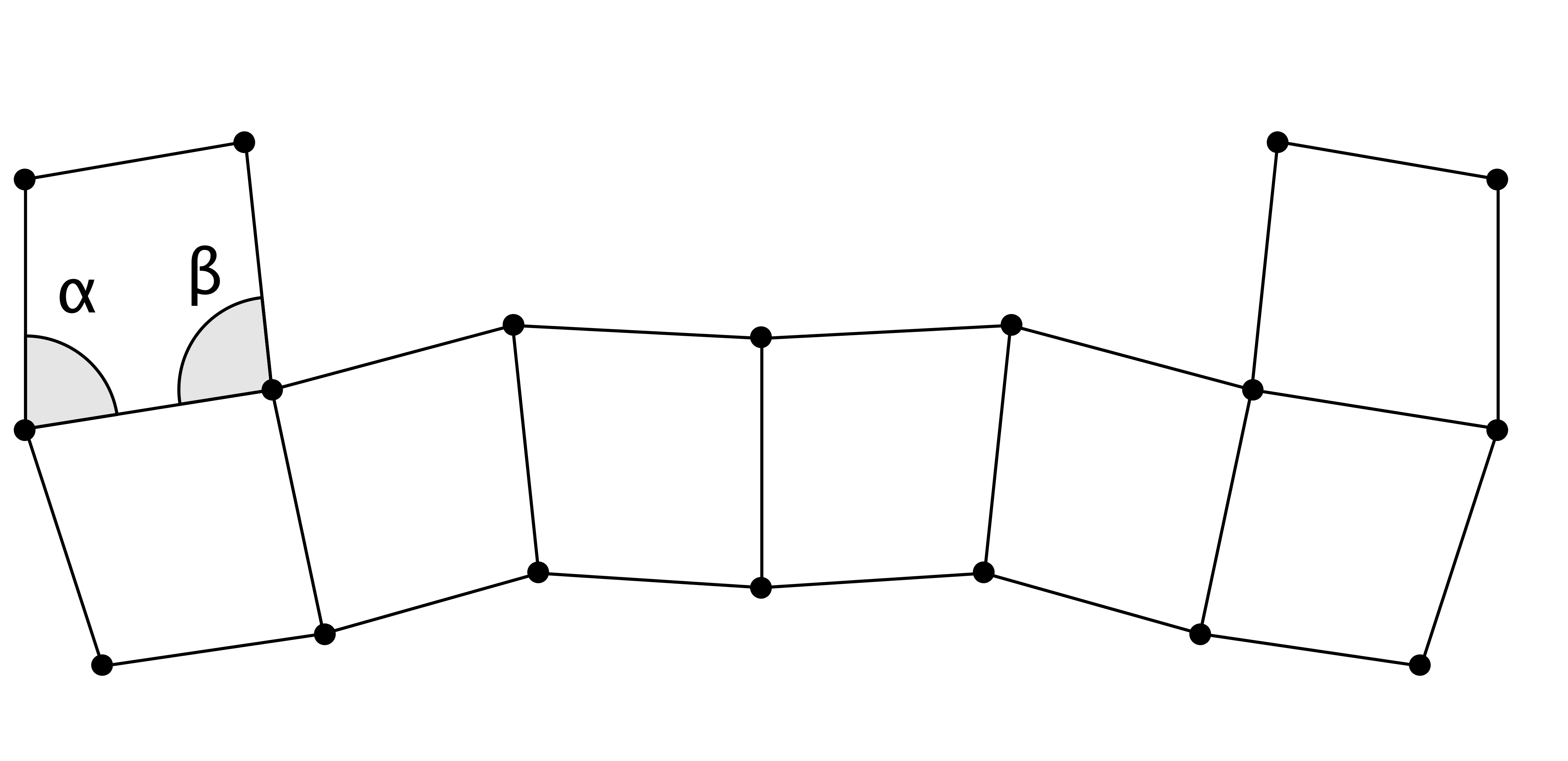}\label{fig:Y2}}
\subfloat[]{\includegraphics[width=7.5cm]{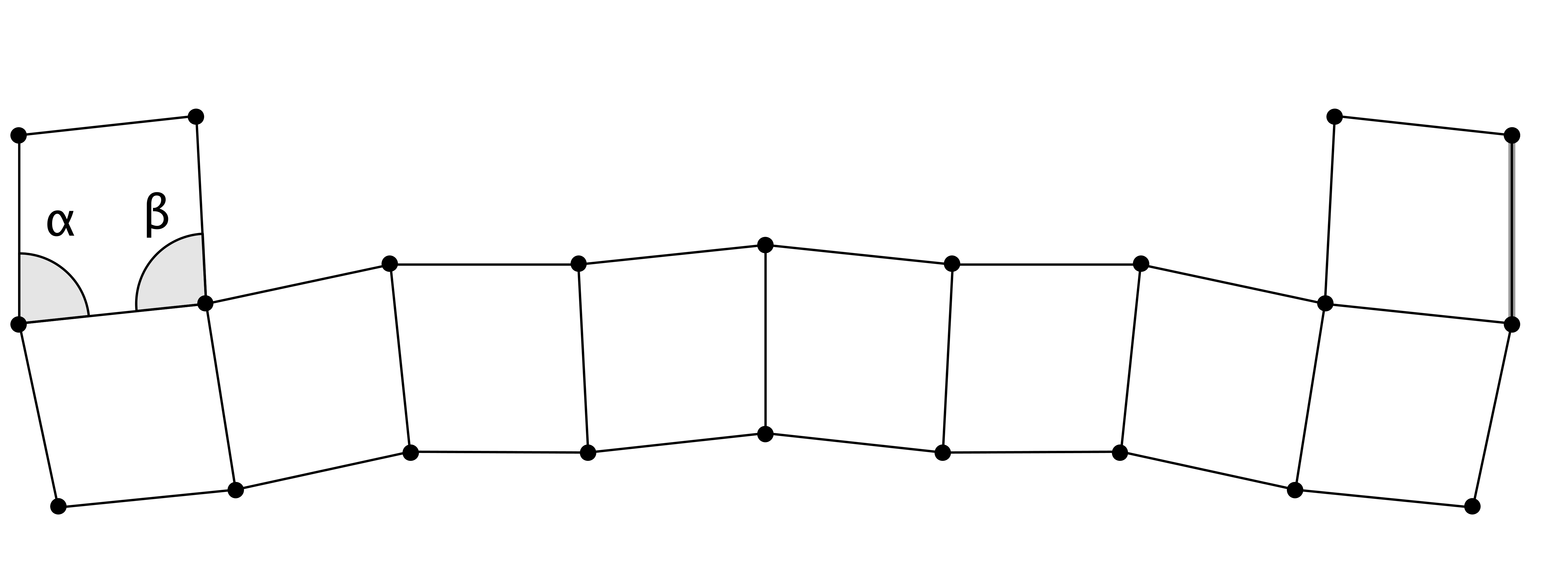}\label{fig:Y3}}
\caption{Unfolding for (a)$Y_2$, (b) $Y_3$}
\end{figure}

The $Y$ family orbits is a family of unstable orbit which covers exactly $L_n$ in Figure ~\ref{fig:Coordinate Space 2}. Some unfoldings are shown in Figure ~\ref{fig:Y2}, ~\ref{fig:Y3} with the parameter angle $\alpha$ and $\beta$ marked.
\begin{comment}
\begin{wrapfigure}{r}{6cm}
\vspace{-20pt}
  \begin{center}
    \includegraphics[width=6cm]{Pictureb5}
  \end{center}
  \caption{$Y_2$}
\label{fig:Y2}
\end{wrapfigure}
\end{comment}

For example, let us look at $X_2$ in Figure ~\ref{fig:X2}. If the quadrilateral lies on $L_2$ in the $\alpha$-$\beta$ parameter plane, then the line $\ell_4$ is vertical. So, if we reflect the first four square in the unfolding of Figure \ref{fig:X2}, we would get the unfolding in Figure ~\ref{fig:Y2}, which we call $Y_2$. The whole $Y$ family arises this way, and the near-square condition (for this orbit to work) is looser than that for the $X$ family (since there are less vertices to consider compared to $X$ orbits).

To sum up, if a quadrilateral is $\varepsilon_n$-near square and lies on the line $L_n$ in the $\alpha-\beta$ parameter plane, then it has orbit $Y_n$.

%%%%%	 %%%%%
	 %	  	%
%%%%%	     %
	 %	     %
%%%%%  %      %

\subsection{Summary}

So now let us put the whole proof together. Any quadrilateral $q$, suppose it is $\dfrac{\pi}{107}$-near square. 

\begin{enumerate}
\item If it is a rectangle, then it has a periodic path where the ball bounces between two parallel lines.

\item If $q$ is not a rectangle, then it has at least one acute angle, call it $\alpha$. If one of the angle adjacent to $\alpha$ is acute, then $q$ has orbit $F$. If one of the angle adjacent to $\alpha$ is $90^\circ$, then $q$ has orbit $R$. 

\item Suppose that both angles adjacent to $\alpha$ are obtuse, and let us call the smaller one $\beta$, the larger one $\gamma$. If the angle opposing $\alpha$ is acute, then $q$ has orbit $A$. 

\item Assume that the angle opposing $\alpha$ is larger than or equal to $\dfrac{\pi}{2}$ and call it $\theta$. Then we know $\alpha+2\beta+\dfrac{\pi}{2}\leq\alpha+\beta+\gamma+\theta=2\pi$, so $\alpha+2\beta\leq\dfrac{3\pi}{2}$. So in the $\alpha$-$\beta$ plane, the only region left is $T_2, T_3, L_2, L_3$. And these regions are covered by orbit $X_2, X_3, Y_2, Y_3$.
\end{enumerate}

This completes our proof for Theorem~\ref{Thrm:epsilon107}.

%	 %
%	 %
%%%%%
	 %
	 %

\section{Proof of Theorem ~\ref{Thrm:Rectangle Cover}}

The proof is pretty much identical to that of Theorem ~\ref{Thrm:epsilon107}. All these orbits still works for rectangles. Fix a rectangle $K$. For any quadrilateral $q$, suppose it is sufficiently near $K$. If $q$ is a rectangle, then clearly it has a periodic path. If it has two adjacent acute angle, then it has orbit $F$. If it has one acute angle adjacent to a right angle, then it has orbit $R$. If it has two acute angle opposing each other, then it will have orbit $A$.

Suppose it has at least one acute angle $\alpha$, two adjacent obtuse angle, and the opposite angle to $\alpha$ is not acute. Let $\beta$ be the smallest of the obtuse angles adjacent to $\alpha$. Again, we have the inequality $\alpha+2\beta<\dfrac{3\pi}{2}$, and the orbits $X_2, X_3, Y_2, Y_3$ covers these remaining regions in the $\alpha$-$\beta$ parameter plane. 

The critical question is how near to the fixed rectangle $K$ do we need? Unfortunately this is not easy to answer, as this condition depends on which fixed rectangle $K$ we choose. The ``flatter'' the rectangle $K$ is, the stricter the condition will need to be.
%%%%%%%%%%%
\\[1.0cm]\textbf{Acknowledgement.} We would like to thank Prof. W Patrick Hooper for his mentoring and support throughout our research, and for providing us with `McBilliards II', a Java program that searches for periodic billiard paths. We drew lots of observation and inspiration from this Java program. `McBilliards II' is available online at \url{http://wphooper.com/visual/mcb2/}.


\begin{thebibliography}{9}
\bibitem{Rational1}
M. Boshernitzan, G. Galperin and T. Kr\"uger and S. Troubetzkoy,
Periodic Billiard Orbits Are Dense in Rational Polygons,
\emph{Transactions of the American Mathematical Society}, \textbf{350(9)} (1998), 3523--3535.

\bibitem{PatSchwartz}
W. P. Hooper and R. E. Schwartz,
Billiards in Nearly Isosceles Triangles,
\emph{Journal of Modern Dynamics}, \textbf{3(2)} (2009), 159--231.

\bibitem{Rational2}
H. Masur and S. Tabachnikov,
Rational billiards and flat structures,
\emph{Handbook Dynamical Systems}, \textbf{1A} (2002), 1015--1089.

\bibitem{Triangle1}
R. E. Schwartz,
Obtuse Triangular Billiards I: Near the (2,3,6) Triangle,
\emph{Journal of Experimental Mathematics}, \textbf{15(2)} (2006), 161--182.

\bibitem{Triangle2}
R. E. Schwartz,
Obtuse Triangular Billiards II: 100 Degrees Worth of Periodic Trajectories,
\emph{Journal of Experimental Mathematics}, \textbf{18(2)} (2008), 137-171.

\bibitem{Tabachnikov}
S. Tabachnikov,
\emph{Billiards},
Panoramas et Synth\`eses. 1. Paris: Soci\'et\'e Math\'ematique de France. vi, 142 p., 1995.

\end{thebibliography}
\end{document}